\theoremstyle{plain}
\newtheorem{Theorem}{Thm}[section]
\newtheorem{Thm}[Theorem]{Theorem}
\newtheorem{Lem}[Theorem]{Lemma}
\newtheorem{Prop}[Theorem]{Proposition}
\newtheorem*{Thm*}{Theorem}
\newtheorem*{Prop*}{Proposition}
\theoremstyle{definition}
\newtheorem{Def}[Theorem]{Definition}
\newtheorem{Rem}[Theorem]{Remark}
\newtheorem{Exm}[Theorem]{Example}
\newtheorem{Const}[Theorem]{Constraint}
\newcommand\mbb{\mathbb}
\newcommand\mcal{\mathcal}
\newcommand\A{\mbb{A}}
\newcommand\C{\mbb{C}}
\newcommand\N{\mbb{N}}
\renewcommand\P{\mbb{P}}
\newcommand\Q{\mbb{Q}}
\newcommand\PP{\mbb{P}}
\newcommand\R{\mbb{R}}
\newcommand\Z{\mbb{Z}}
\newcommand\V{\mcal{V}}
\newcommand\ul{\underline}
\DeclareMathOperator\hilb{Hilb}
\DeclareMathOperator\gor{Gor}
\DeclareMathOperator\rk{rk}
\DeclareMathOperator\crk{crk}
\DeclareMathOperator\cl{cl}
\DeclareMathOperator\lspan{span}
\DeclareMathOperator\ev{ev}
\DeclareMathOperator\divi{Div}
\DeclareMathOperator\exr{Exr}
\newcommand\I{\mathcal{I}}
\newcommand\ratto{\dashrightarrow}
\begin{document}
\title{Extreme Rays of Hankel Spectrahedra for Ternary Forms}
\author{Grigoriy Blekherman}
\address{Georgia Institute of Technology, Atlanta, Georgia, USA}
\email{greg@math.gatech.edu}
\author{Rainer Sinn}
\address{Georgia Institute of Technology, Atlanta, Georgia, USA}
\email{sinn@math.gatech.edu}
\subjclass[2010]{Primary: 13P25, 14P10, 05E40; Secondary: 14N99}
\keywords{sums of squares, non-negative polynomials, Hankel matrices, Cayley-Bacharach, spectrahedra, apolar ideals}

\begin{abstract}
The cone of sums of squares is one of the central objects in convex algebraic geometry. Its defining linear inequalities correspond to the extreme rays of the dual convex cone. This dual cone is a spectrahedron, which can be explicitly realized as a section of the cone of positive semidefinite matrices with the linear subspace of Hankel (or middle catalecticant) matrices. In this paper we initiate a systematic study of the extreme rays of Hankel spectrahedra for ternary forms. 
We show that the Zariski closure of the union of extreme rays is the variety of all Hankel matrices of corank at least $4$, an irreducible variety of codimension $10$ and we determine its degree. We explicitly construct an extreme ray of maximal rank using the Cayley-Bacharach Theorem for plane curves. 
We apply our results to the study of the algebraic boundary of the cone of sums of squares. Its irreducible components are dual varieties to varieties of Gorenstein ideals with certain Hilbert functions. We determine these Hilbert functions for some cases of small degree. We also observe surprising gaps in the ranks of Hankel matrices of the extreme rays.
\end{abstract}

\maketitle

\section*{Introduction}
The following convex cones are fundamental objects in convex algebraic geometry: the cone $P_{n,2d}$ of homogeneous polynomials (forms) of degree $2d$ in $\R[x_1,\ldots,x_n]$ that are nonnegative on $\R^n$, and the cone $\Sigma_{n,2d}$ consisting of sums of squares of degree $2d$. Hilbert showed that only in the following three cases every nonnegative form is a sum of squares of forms: bivariate forms, quadratic forms, and ternary forms of degree 4. In all other cases Hilbert showed the existence of nonnegative polynomials that are not sums of squares  \cite{HilMR1510517}. 

The dual cones $P_{n,2d}^\vee$ and $\Sigma_{n,2d}^\vee$ consist of all linear functionals nonnegative on the corresponding primal cone. The \textit{extreme rays} of the dual cones provide the defining linear inequalities of the primal cones. Therefore, understanding extreme rays of $\Sigma^\vee_{n,2d}$ is crucial in understanding the boundary of the cone $\Sigma_{n,2d}$, as well as the difference between the cones $P_{n,2d}$ and $\Sigma_{n,2d}$. In the cases where there exist nonnegative polynomials that are not sums of squares, $\Sigma_{n,2d}^\vee$ must contain extreme rays that do not belong to $P^\vee_{n,2d}$. In recent years there has been considerable progress in understanding the extreme rays of $\Sigma_{n,2d}^\vee$ and the \emph{algebraic boundary} of $\Sigma_{n,2d}$, i.e.~the Zariski closure of its Euclidean boundary, in the two smallest cases where nonnegative polynomials are not equal to sums of squares: $n=3,$ $2d=6$ and $n=4,$ $2d=4$ \cite{BleNP, BleHauOttRanStuMR2999301}. In \cite{BleNP}, extreme rays of $\Sigma_{3,6}^\vee$ and $\Sigma_{4,4}^\vee$ were described using the Cayley-Bacharach theorem. In \cite{BleHauOttRanStuMR2999301}, this description led to a quite surprising connection between the algebraic boundaries of $\Sigma_{3,6}$ and $\Sigma_{4,4}$ and moduli spaces of K3 surfaces. In \cite{BlePositiveGorensteinIdeals}, the first author related the study of extreme rays of $\Sigma_{n,2d}^\vee$ to the associated \textit{Gorenstein ideals}.

Taking these results as a point of departure, we begin a systematic study of extreme rays of the cone $\Sigma_{n,2d}^\vee$ for ternary forms, i.e. $n=3$. We will denote the associated cones simply by $\Sigma_{2d}$ and $\Sigma_{2d}^\vee$. Our main technical tool will be the Buchsbaum-Eisenbud structure theorem for ternary Gorenstein ideals, and its refined analysis by Diesel in \cite{Die}. We will see that irreducible components of the algebraic boundary of $\Sigma_{2d}$ are dual varieties to varieties of Gorenstein ideals with certain Hilbert functions. This gives us a beautiful melding of convex geometry, commutative algebra, and algebraic geometry. 

The case of $2d=6$ was completely described in \cite{BleNP, BleHauOttRanStuMR2999301} and therefore we restrict our attention to $2d\geq 8$. Our first main result deals with the Zariski closure of the set of all extreme rays of $\Sigma_{2d}^\vee$ and tells us that extreme rays of $\Sigma_{2d}^\vee$ are plentiful, when compared to extreme rays of $P_{2d}^\vee$.

\begin{Thm*}[Theorem {\ref{Thm:ZarClExtRays}}]
For any $d\geq 4$, the Zariski closure of the set of extreme rays of $\Sigma_{2d}^\vee$ is the variety of Hankel matrices of corank at least $4$. It is irreducible, has codimension $10$, and degree $\prod_{\alpha = 0}^{3} \binom{N+\alpha}{4-\alpha}/\binom{2\alpha +1}{\alpha}$, where $N = \binom{d+2}{2}$.
\end{Thm*}
By contrast, the Zariski closure of the extreme rays of $P_{2d}^\vee$ is the $2d$-th Veronese embedding of $\PP^2$ and has dimension $2$ \cite[Chaper 4]{BleParThoMR3075433}. Note that for $\Sigma_6^\vee$, it follows from results of \cite{BleNP, BleHauOttRanStuMR2999301} that the Zariski closure of the set of extreme rays is the variety of Hankel matrices of corank at least $3$. It has dimension $21$, codimension $6$ and degree $2640$. Existence of extreme rays of co-rank $4$ is shown via an intricate explicit construction, which makes heavy use of Cayley-Bacharach theorem for plane curves. The details are given in Section \ref{sec:ExtRays}. 

The extreme rays of the dual cone $\Sigma_{2d}^\vee$ are stratified by the rank of the associated Hankel (middle catalecticant) matrix. 
This intricate stratification characterizes the algebraic boundary of the sums of squares cone via projective duality theory. We show the following theorem in section \ref{sec:ExtRays}.

\begin{Thm*}[Theorem \ref{Thm:algboundandgors}]
Let $X$ be an irreducible component of the algebraic boundary of $\Sigma_{2d}$. Then its dual projective variety $X^\ast$ is a subvariety of the Zariski closure of the union of extreme rays of $\Sigma_{2d}^\vee$, i.e.~the variety of Hankel matrices of corank $\geq 4$. Moreover, there is a Hilbert function $T$ such that the quasiprojective variety $\gor(T)$ of all Gorenstein ideals with Hilbert function $T$ is Zariski dense in $X^\ast$. 
\end{Thm*}

We work out the first three nontrivial cases $d=3,4,5$ in Section \ref{sec:Dextics}, extending the study of the algebraic boundary of the sums of squares cones for ternary sextics and quaternary quartics in \cite{BleHauOttRanStuMR2999301}.
More specifically we show in section \ref{sec:Dextics}: 

\begin{Prop*}
The Hankel spectrahedron $\Sigma_8^\vee$ has extreme rays of rank $1$, $10$, and $11$. We construct extreme rays of rank $10$ and $11$ such that the Hilbert function of the corresponding Gorenstein ideal is
\begin{eqnarray*}
T_{10} & = & (1,3,6,9,10,9,6,3,1) \text{ and} \\
T_{11} & = & (1,3,6,10,11,10,6,3,1),
\end{eqnarray*}
respectively. 
The dual varieties to $\gor(T_{10})$ and $\gor(T_{11})$ are irreducible components of the algebraic boundary of $\Sigma_8$.
\end{Prop*}

It is possible to show using refined analysis of \cite[Proposition 3.9]{Die} that these are the only Hilbert functions of Gorenstein ideals corresponding to extreme rays of $\Sigma_8^\vee$, and thus the algebraic boundary of $\Sigma_8$ has $3$ irreducible components: the discriminant, which is dual to rank $1$ extreme rays, and the dual varieties to $\gor(T_{10})$ and $\gor(T_{11})$.

\begin{Thm*}[Theorem {\ref{Thm:d5}} for $d=5$]
For every $r\in\{13,\ldots,17\}$, there is an extreme ray $\R_+\ell_r$ of $\Sigma_{10}^\vee$ such that the rank of the Hankel matrix $B_{\ell_r}$ is $r$. We construct extreme rays such that the Hilbert function $T_r$ of the associated Gorenstein ideal $I(\ell_r)$ is:
\begin{eqnarray*}
T_{13} & = & (1,3,6,9,12,13,12,9,6,3,1), \\
T_{14} & = & (1,3,6,10,13,14,13,10,6,3,1), \\
T_{15} & = & (1,3,6,10,14,15,14,10,6,3,1), \\
T_{16} & = & (1,3,6,10,14,16,14,10,6,3,1), \\
T_{17} & = & (1,3,6,10,15,17,15,10,6,3,1). 
\end{eqnarray*}
The dual varieties to $\gor(T_r)$ form irreducible components of the algebraic boundary of the sums of squares cone $\Sigma_{10}$ for all $r\in\{13,\ldots,17\}$.
\end{Thm*}

It follows from the above the theorem that the algebraic boundary of $\Sigma_{10}$ has at least $6$ irreducible components. We conjecture that this list is complete. 

We saw previously that for $d \geq 4$ the minimal co-rank of an extreme ray is $4$. It was shown in \cite{BlePositiveGorensteinIdeals} that $\Sigma_{2d}^\vee$ has no extreme rays of rank $r$ with $1<r<3d-2$. We also see form the above results that this is the only gap in rank of extreme rays for $\Sigma_{8}^\vee$ and $\Sigma_{10}^\vee$. Surprisingly, the cone $\Sigma_{12}^{\vee}$ has another gap in possible ranks of extreme rays. We show the following theorem in section \ref{sec:Dextics}. 

\begin{Thm*}
The cones $\Sigma_{2d}^\vee$ for $d=4,5$ have extreme rays of rank $r$ for $r=1$ and all $r$ such that $3d-2\leq r \leq \binom{d+2}{2}-4$. The cone $\Sigma_{12}^{\vee}$ has no extreme ray of rank $17$, but has extreme rays of rank $r$ for all $16\leq r \leq24$, $r\neq 17$. 
\end{Thm*}

We leave the reader with the following open questions:\\
\textbf{Open Questions:}
\textit{
\begin{enumerate}
\item What are the possible ranks of Hankel matrices of extreme rays of $\Sigma_{2d}^\vee$?
\item Given the rank of a Hankel matrix of an extreme ray of $\Sigma_{2d}^\vee$, what are the possible Hilbert functions of the associated Gorenstein ideal? In all observed examples, the rank uniquely determines the Hilbert function for an extreme ray of $\Sigma_{2d}^\vee$.
\item If there exists an extreme ray of $\Sigma_{2d}^\vee$ with Gorenstein ideal with Hilbert function $T$, then is variety $\gor(T)$ necessarily dual to an irreducible component of $\partial_a\Sigma_{2d}$? We conjecture that this is the case.
\end{enumerate}}

\section{Hankel Matrices and Gorenstein Ideals}\label{sec:Gorenstein}

Let us fix the following notations:
We denote by $k[\ul{x}]=k[x,y,z]$ the polynomial ring over a field $k$ generated by $3$ variables. We consider it with the standard total degree grading and denote by $k[\ul{x}]_m$ the $k$-vector space of homogeneous polynomials of degree $m$, which has dimension $\binom{m+2}{2}$.

A linear functional $\ell$ on the real vector space of ternary forms of degree $2d$ is non-negative on every square if and only if the bilinear form 
\[
B_\ell\colon \left \{
\begin{array}[]{rcl}
\R[x,y,z]_d\times \R[x,y,z]_d & \to & \R \\
(f,g) & \mapsto & \ell(f\cdot g).
\end{array}
\right.
\]
is positive semi-definite. The representing matrix of this bilinear form with respect to the monomial basis is the Hankel matrix associated with $\ell$. 
Therefore, the convex cone dual to the cone $\Sigma_{2d}$ of sums of squares of polynomials is the Hankel spectrahedron:
\[
\Sigma_{2d}^\vee = \{\ell\in\R[x,y,z]_{2d}^\ast \colon (\ell(x^{\alpha+\beta}))_{\alpha,\beta} \text{ is positive semi-definite}\}.
\]

Every real point evaluation $\ev_x\colon\R[x,y,z]_{2d}\to \R$, $p\mapsto p(x)$, at $x\in\R^3$ is an extreme ray of $\Sigma_{2d}^\vee$. In fact, by the Veronese embedding of $\PP^2$ of degree $2d$, they are exactly the positive semi-definite rank $1$ Hankel matrices. We are interested in extreme rays of higher rank. These correspond to supporting hyperplanes of $\Sigma_{2d}$ which expose a face whose relative interior consists of strictly positive polynomials. Conversely, for every non-negative polynomial $p$ that is not a sum of squares, there exists an extreme ray $\R_+\ell$ of $\Sigma_{2d}^\vee$ such that $\ell(p)<0$.

\subsection{Gorenstein Ideals}
Let $\ell\in \C[\ul{x}]_{m}^\ast$ be a linear functional on ternary forms of degree $m$. To $\ell$ and every pair of positive integers $u,v\in\N$ with $u+v=m$, we associate the bilinear form
\[
B_{\ell,u,v}\colon 
\left\{
\begin{array}[h]{rcl}
\C[\ul{x}]_u\times\C[\ul{x}]_v & \to & \C \\
(p,q) & \mapsto & \ell(pq).
\end{array}\right.
\]
The representing matrices of these bilinear forms with respect to the monomial bases are called the \emph{Catalecticant matrices} of $\ell$.

\begin{Def}\label{Def:Gorenstein}
Let $\ell\in\C[\ul{x}]_{m}^\ast$ be a linear functional. We call the homogeneous ideal $I(\ell)$ of $\C[\ul{x}]$ generated by
\[
\{p\in \C[\ul{x}]_k\colon k>m \text{ or } \ell(pq)=0 \text{ for all } q\in\C[\ul{x}]_{m-k}\}
\]
the \emph{Gorenstein ideal with socle} $\ell$. We call $m$ the \emph{socle degree} of the ideal.
\end{Def}
These ideals were studied extensively in the literature, cf.~Iarrobino-Kanev \cite{IarKanMR1735271}. Our definition is probably the most direct for $0$-dimensional Gorenstein ideals, cf.~\cite[Theorem 21.6 and Exercise 21.7]{EisenbudMR1322960}.

\begin{Rem}\label{Rem:SymHilbFunc}
The degree $u$ part of the ideal is the left-kernel of the bilinear form $B_{\ell,u,v}$ for $u\leq m$. In particular, the Hilbert function of a Gorenstein ideal $I$ with even socle degree $2d$ is symmetric around $d$, i.e.~$\hilb(I,i)=\hilb(I,2d-i)$ for all $0\leq i \leq 2d$.
\end{Rem}

We can consider the set of all Gorenstein ideals with a fixed socle degree $m$ as a projective space by identifying an ideal with its socle, which is uniquely determined by the ideal up to scaling. In this projective space, we consider the set $\gor(T)$ of all Gorenstein ideals with a given Hilbert function $T$.
\begin{Prop}
The set $\gor(T)$ of all Gorenstein ideals with socle degree $m$ and Hilbert function $T$ is a quasiprojective subvariety of the projective space of all Gorenstein ideals with socle degree $m$.
\end{Prop}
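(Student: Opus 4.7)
The plan is to realise $\gor(T)$ as a locally closed subset of $\PP(\C[\ul{x}]_m^\ast)$. The ambient projective space parameterises Gorenstein ideals of socle degree $m$ via $[\ell]\mapsto I(\ell)$, a well-defined bijection since $I(\ell)$ depends homogeneously on $\ell$ by Definition \ref{Def:Gorenstein} and $\ell$ is recovered (up to scalar) as the socle of $I(\ell)$. The strategy is then to express the Hilbert function conditions as rank conditions on the catalecticant matrices of $\ell$, whose entries are linear in the coefficients of $\ell$.

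By Definition \ref{Def:Gorenstein} and Remark \ref{Rem:SymHilbFunc}, the degree $i$ component of $I(\ell)$ for $0\leq i\leq m$ is exactly the left kernel of $B_{\ell,i,m-i}$, while for $i>m$ it is all of $\C[\ul{x}]_i$. Hence
\[
\hilb(I(\ell),i) \;=\; \binom{i+2}{2} - \dim\ker B_{\ell,i,m-i} \;=\; \rk B_{\ell,i,m-i}, \qquad 0\leq i\leq m,
\]
and $\hilb(I(\ell),i)=0$ otherwise. Prescribing $\hilb(I(\ell),\cdot)=T$ is therefore equivalent to the finitely many rank conditions $\rk B_{\ell,i,m-i}=T(i)$ for $i=0,1,\ldots,m$.

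For each such $i$, the set
\[
\{[\ell]\in\PP(\C[\ul{x}]_m^\ast)\colon \rk B_{\ell,i,m-i}=T(i)\}
\]
is locally closed: it is the complement, inside the closed determinantal subvariety where the $(T(i){+}1)\times(T(i){+}1)$ minors vanish, of the closed determinantal subvariety where the $T(i)\times T(i)$ minors vanish. Both subvarieties are well defined projectively because the entries of $B_{\ell,i,m-i}$ are linear in $\ell$, so the minors are homogeneous forms in the coefficients of $\ell$. Taking the intersection over $i=0,\ldots,m$ — a finite intersection of locally closed subsets — presents $\gor(T)$ as a locally closed subset of $\PP(\C[\ul{x}]_m^\ast)$, which is precisely the definition of a quasiprojective subvariety.

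There is no real obstacle in this argument: it is essentially semicontinuity of matrix rank, applied simultaneously to the finite list of catalecticants attached to $\ell$. The only conceptual care required is in the first paragraph, namely that the parameter space of Gorenstein ideals of socle degree $m$ is correctly identified with $\PP(\C[\ul{x}]_m^\ast)$ via $\ell\mapsto I(\ell)$; once this is in place, the remainder is a direct determinantal computation.
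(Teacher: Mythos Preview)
Your proof is correct and follows exactly the same approach as the paper: express the Hilbert function condition as the rank conditions $\rk B_{\ell,u,v}=T(u)$ on the catalecticant matrices. The paper's proof is a single sentence to this effect, while you have (correctly) spelled out why rank conditions are locally closed and why the finite intersection is quasiprojective.
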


\begin{proof}
The condition to have a given Hilbert function can be expressed as rank conditions on the Catalecticant matrices, namely
\[
\rk(B_{\ell,u,v}) = T(u).
\]
\end{proof}

\begin{Rem}\label{Rem:gorTQ}
(a) The quasiprojective variety $\gor(T)$ is defined over $\Q$, because the minors of the Catalecticant matrices are polynomials with coefficients in $\Z$.\\
(b) Note that a $k$-rational point $\ell\in\gor(T)$ for a subfield $k\subset \C$ is a linear functional $\ell = \ell\otimes 1 \in k[\ul{x}]_m\otimes \C$.
\end{Rem}

\begin{Def}
We call a Hilbert function $T$ \emph{permissible} if there is a Gorenstein ideal $I\subset\C[\ul{x}]$ with Hilbert function $T$.
\end{Def}

Using the Buchsbaum-Eisenbud Structure Theorem for height $3$ Gorenstein ideals (cf.~Buchsbaum-Eisenbud \cite{BE}), Diesel proved the following.
\begin{Thm}[cf.~Diesel {\cite[Theorem 1.1 and 2.7]{Die}}]\label{Thm:DieGorT}
For every permissible Hilbert function $T$, the variety $\gor(T)$ is an irreducible unirational variety.
\end{Thm}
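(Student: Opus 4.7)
The plan is to exhibit $\gor(T)$ as the dominant image of a rational map from an affine space, which immediately yields both irreducibility and unirationality. The tool is the Buchsbaum--Eisenbud Structure Theorem for height~$3$ Gorenstein ideals: every such ideal $I\subset\C[\ul{x}]$ is generated by the $2k\times 2k$ submaximal Pfaffians of a skew-symmetric matrix $M$ appearing in its minimal self-dual free resolution
\[
0 \to \C[\ul{x}](-s) \to \bigoplus_{i=1}^{2k+1}\C[\ul{x}](-b_i) \xrightarrow{M} \bigoplus_{i=1}^{2k+1}\C[\ul{x}](-a_i) \to \C[\ul{x}] \to \C[\ul{x}]/I \to 0.
\]
For any permissible $T$, the graded Betti numbers, and hence the multisets $\{a_i\}$ and $\{b_i = s-a_i\}$, are determined by $T$ alone, since the number of minimal generators and relations in each degree can be recovered from $T$ via the standard computation of Tor from a Koszul-type resolution of the residue field.

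Next, let $V(T)$ be the finite-dimensional $\C$-vector space of skew-symmetric $(2k+1)\times(2k+1)$ matrices whose $(i,j)$-entry is a homogeneous form of degree $s-a_i-a_j$ (set to zero when this number is non-positive). Since $V(T)$ is an affine space, $\PP(V(T))$ is rational and irreducible. Consider the rational map
\[
\phi\colon \PP(V(T)) \ratto \gor(T)\subset \PP(\C[\ul{x}]_m^\ast)
\]
sending the class of $M$ to the socle of the ideal generated by its submaximal Pfaffians; this socle is one-dimensional, so it recovers the Gorenstein ideal up to scaling. By Buchsbaum--Eisenbud, every $I\in\gor(T)$ is the Pfaffian ideal of some $M\in V(T)$, so $\phi$ is surjective onto $\gor(T)$ wherever it is defined. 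Semi-continuity of the Catalecticant ranks ensures that the condition \emph{Pfaffians of $M$ generate an ideal with Hilbert function exactly $T$} cuts out an open subset of $V(T)$, so $\phi$ is regular on a dense open set, and unirationality plus irreducibility of $\gor(T)$ follow as images of $\PP(V(T))$.

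The main obstacle is to guarantee that this open set is actually nonempty: a priori a generic skew-symmetric matrix with the prescribed degree pattern could produce Pfaffians whose ideal has Hilbert function strictly smaller than $T$, or fails to have height $3$. Handling this requires verifying that the degree sequence $(a_i)$ dictated by $T$ is realised by some $M\in V(T)$, and this is precisely the content of Diesel's refinement of Buchsbaum--Eisenbud in \cite{Die}: she classifies the degree patterns arising from permissible Hilbert functions and shows that the parametrisation by $V(T)$ dominates $\gor(T)$. With her nonemptiness statement in hand, the argument above concludes.
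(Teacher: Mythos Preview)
Your overall architecture matches Diesel's strategy as summarised in the paper (Remark~\ref{Rem:DieselCom}): parametrise Gorenstein ideals by an affine space of skew-symmetric matrices via Buchsbaum--Eisenbud, then read off irreducibility and unirationality from the image. However, there is a genuine error in your setup.

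The claim that ``the graded Betti numbers \ldots\ are determined by $T$ alone'' is false. For height~$3$ Gorenstein ideals in $\C[\ul{x}]$, the Hilbert function determines only the alternating sums $\sum_i(-1)^i\beta_{i,j}$ in each degree $j$, not the individual $\beta_{i,j}$. Concretely, two ideals in the same $\gor(T)$ can have minimal free resolutions with skew matrices of \emph{different sizes}: cancellation of a generator--relation pair in complementary degrees changes the Betti table without changing $T$. So your space $V(T)$ is not well-defined, and even after you pick a size, your surjectivity claim (``every $I\in\gor(T)$ is the Pfaffian ideal of some $M\in V(T)$'') fails for those $I$ requiring more generators. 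The issue you flag at the end is therefore not merely nonemptiness of an open set; it is that your map cannot hit all of $\gor(T)$ for any single choice of degree pattern.

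Diesel's actual argument, outlined in Remark~\ref{Rem:DieselCom}, handles exactly this: among all generator/relation degree sequences compatible with $T$ there is a \emph{minimal} one $D_{\min}$, and the locus $\gor_{D_{\min}}\subset\gor(T)$ of ideals realising it is open and \emph{dense}. One then parametrises only $\gor_{D_{\min}}$ by the affine space of skew matrices with that degree pattern; irreducibility and unirationality of $\gor(T)$ follow because its dense open subset has these properties. The substantive step you are missing is this density of $\gor_{D_{\min}}$ in $\gor(T)$, which is where the real work in \cite{Die} lies.
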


We will use the fact that $\gor(T)$ is unirational to determine the dimension of $\gor(T)$ for special Hilbert functions $T$. In order to do this, we need the more precise information on the unirationality of $\gor(T)$ given by Diesel. The information we need is spread out over the paper Diesel \cite{Die}. We will give a short summary with references, using her notation and setup. 
\begin{Rem}\label{Rem:DieselCom}
Diesel proves that for a given permissible Hilbert function $T$ there is a minimal set (with respect to inclusion) $D_{min}=(Q,P)$ of degrees of generators $Q = \{q_1,\ldots,q_u\}$ and relations $P = \{p_1,\ldots,p_u\}$ for a Gorenstein ideal with Hilbert function $T$. We assume $q_1\leq q_2\leq\ldots\leq q_u$ and $p_1\geq p_2\geq\ldots\geq p_u$. The set $\gor_{D_{min}}$ of all Gorenstein ideals with generators of degree as specified by $Q$ is a dense subset of $\gor(T)$, see the proof of \cite[Theorem 2.7 and Theorem 3.8]{Die}. Given $D_{min}$, we consider the affine space $\A^{h(E_M)}$ of skew-symmetric matrices with entries in $\C[\ul{x}]$ where the $(i,j)$-th entry is homogeneous of degree $p_j-q_i$ ($i\neq j$) and the rational map $\pi\colon \A^{h(E_M)} \ratto \gor_{D_{min}}$ that takes a matrix to the Gorenstein ideal generated by its Pfaffians. This statement uses the Buchsbaum-Eisenbud Structure Theorem, cf.~\cite{Die}, p.~367 and p.~369.
Given a Hilbert function $T$, the set $D_{min}$ of degrees of generators and relations for $T$ is determined in a combinatorial way: Given the socle degree $m$ and the minimal degree $k$ of a generator of the ideal, there is a one-to-one correspondence between permissible Hilbert functions of order $k$ and self-complementary partitions of $2k$ by $m-2k+2$ blocks, cf.~\cite[Proposition 3.9]{Die}. These partitions give the maximum number of generators, which is $2k+1$, cf.~\cite[Theorem 3.3]{Die}. To refine these sequences to $D_{min}$, we iteratively delete pairs $(q_i,q_j)$ from $Q$ and $(p_i,p_j)$ from $P$ whenever they satisfy $r_i+r_j=p_i+p_j-q_i-q_j=0$, cf.~\cite{Die}, p.~380.
\end{Rem}

We are particularly interested in Gorenstein ideals with socle in even degree $2d$ with the property that the middle Catalecticant has corank $4$, i.e.~rank $\binom{d+2}{2}-4$. The proof of the following statement is analogous to the proof of Diesel \cite[Theorem 4.4]{Die}.
\begin{Lem}\label{Lem:Corank4dim}
Let $d\geq 4$ be an integer. The projective variety $X_{-4}$ of middle Catalecticant matrices of corank at least $4$, i.e.~of rank at most $\binom{d+2}{2}-4$, is irreducible of codimension $10$ in the space of middle Catalecticant matrices. It has degree $\prod_{\alpha = 0}^3 \binom{N+\alpha}{4-\alpha}/\binom{2\alpha +1}{\alpha}$, where $N = \binom{d+2}{2}$. In particular, it is defined by the $(\binom{d+2}{2}-3)$-minors of the generic middle Catalecticant matrix.
\end{Lem}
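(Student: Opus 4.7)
The plan is to follow Diesel's strategy from \cite[Theorem 4.4]{Die}: realize $X_{-4}$ as the Zariski closure of a single ``maximal'' Gorenstein stratum $\gor(T^*)$ and compute its dimension via the Pfaffian parametrization of Remark \ref{Rem:DieselCom}. By Remark \ref{Rem:SymHilbFunc} the left-kernel of $B_{\ell,d,d}$ is $I(\ell)_d$, so the corank of the middle Catalecticant equals $\dim I(\ell)_d$, and hence $X_{-4}$ is the set of $\ell$ whose Hilbert function $T$ satisfies $T(d)\leq\binom{d+2}{2}-4$. This set decomposes as the union of $\gor(T)$ over all permissible $T$ meeting this bound.

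Define $T^*$ by $T^*(i)=\binom{i+2}{2}$ for $i\leq d-1$, $T^*(d)=\binom{d+2}{2}-4$, and $T^*(i)=T^*(2d-i)$ for $i>d$. By the symmetry of Remark \ref{Rem:SymHilbFunc}, any other $T$ in our union satisfies $T(i)\leq T^*(i)$ componentwise. I would first identify the minimal resolution of an ideal in $\gor(T^*)$ from the Hilbert series: expanding $(1-t)^3 \sum_i T^*(i)t^i$ as an alternating-sign polynomial reads off $D_{min}=(Q,P)$. For $d\geq 5$ this gives $Q=(d,d,d,d,d+1,\ldots,d+1)$ (with $2d-9$ copies of $d+1$) and $P$ fixed by the self-dual pairing $p_i=2d+3-q_i$, producing an odd total of $2d-5$ generators; the case $d=4$ is slightly different, with $Q=(4,4,4,4,6)$ and $P=(7,7,7,7,5)$. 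This verifies permissibility of $T^*$, and Theorem \ref{Thm:DieGorT} yields irreducibility of $\gor(T^*)$.

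The core of the argument is computing $\dim \gor(T^*)$ using the Pfaffian parametrization $\pi\colon\A^{h(E_M)}\ratto\gor_{D_{min}}$. The dimension $h(E_M)$ of the affine space of admissible skew-symmetric matrices is read off directly from $D_{min}$, while the generic fiber of $\pi$ is the orbit of the graded automorphism group of the free module of generators $F_1=\bigoplus_i \C[\ul{x}](-q_i)$, modulo the scalar subgroup which acts trivially on the ideal of Pfaffians. A direct count then yields $\dim\gor(T^*)=\binom{2d+2}{2}-10$, i.e.~codimension exactly $10$ in the space of middle Catalecticants.

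To finish, one shows that every other stratum $\gor(T)$ in the decomposition of $X_{-4}$ degenerates into $\gor(T^*)$, which is a standard semicontinuity/deformation argument using $T\leq T^*$ componentwise. Hence $X_{-4}=\overline{\gor(T^*)}$ is irreducible of codimension $10$. The final assertion about defining equations is automatic: a symmetric matrix has corank at least $4$ if and only if all its symmetric $(\binom{d+2}{2}-3)$-minors vanish, so these minors cut out $X_{-4}$ set-theoretically. The main obstacle, in my view, is the explicit dimension bookkeeping in the Pfaffian parametrization, in particular the case split at $d=4$ versus $d\geq 5$ where the minimal number and degrees of generators differ.
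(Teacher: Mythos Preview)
Your overall strategy matches the paper's: parametrize the generic stratum $\gor(T^*)$ (the paper calls it $T_1$) by Diesel's Pfaffian construction and read off its dimension. You correctly identify $D_{\min}$, including the $d=4$ anomaly, and your description of the generic fibre of $\pi$ as an orbit of the graded automorphism group of $F_1$ is exactly what underlies the paper's ``overcount of choices of generators''; the two counts agree.

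The gap is in your irreducibility step. The assertion that ``every other stratum $\gor(T)$ degenerates into $\gor(T^*)$ by a standard semicontinuity/deformation argument using $T\leq T^*$ componentwise'' is neither standard nor correct in this generality: the componentwise order on Hilbert functions does not by itself govern the closure relations among the $\gor(T)$. The paper proceeds differently and more concretely. First, it uses that the determinantal locus $S_{-4}$ of \emph{all} symmetric $N\times N$ matrices of corank $\geq 4$ is irreducible of codimension $10$; intersecting with the linear space of Catalecticants, every irreducible component of $X_{-4}$ therefore has codimension at most $10$. This is the input that turns the Pfaffian upper bound on $\dim\gor(T_1)$ into an equality, and, more importantly, it is the tool for ruling out extra components. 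Second, the paper observes that there are exactly \emph{two} permissible Hilbert functions with $T(d)=\binom{d+2}{2}-4$: your $T^*=T_1$, and a second one $T_2$ having one generator in degree $d-1$ and one in degree $d$. It then repeats the Pfaffian dimension count for $T_2$ and finds codimension at least $2d+4\geq 12$; hence $\gor(T_2)$ cannot be a component of $X_{-4}$ and must lie in $\overline{\gor(T_1)}$. Your proposal omits both the codimension-$\leq 10$ input from the ambient symmetric determinantal variety and the identification and explicit treatment of $T_2$; without these you have no mechanism to exclude additional components of $X_{-4}$.
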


\begin{proof}
Let $N = \binom{d+2}{2}$. The quasiprojective variety $S_{-4}$ of symmetric $N\times N$-matrices of rank $N-4$ has codimension $10$ in the projective space of the vector space of symmetric $N\times N$-matrices. 
Therefore the intersection $X_{-4}$ of $S_{-4}$ with the subspace of middle Catalecticant matrices has codimension at most $10$ in this linear space. We will show, that it has codimension exactly $10$ by counting dimensions of the possible $\gor(T)$ using their unirationality. We will use the setup and notation used by Diesel \cite{Die}, see also \ref{Rem:DieselCom}.

There are only two possible Hilbert functions for a Gorenstein ideal $I$ with socle degree $2d$ and $\hilb(I,d)=\binom{d+2}{2}-4$ by their symmetry, namely
\[
T_1=(1,3,6,\ldots,\binom{d+1}{2},\binom{d+2}{2}-4,\ldots),
\]
which corresponds to the case of four generators in degree $d$ and no generators of lower degree, and
\[
T_2 = (1,3,6,\ldots,\binom{d+1}{2}-1,\binom{d+2}{2}-4,\ldots),
\]
which corresponds to the case of one generator of degree $d-1$ and one generator of degree $d$. More precisely, these two Hilbert functions correspond to the self-complementary partitions of $2\times 2d$ resp.~$4\times(2d-2)$ blocks shown in Figure \ref{fig:GorensteinPartitions} by the correspondence explained in Diesel \cite[section 3.4, in paricular Proposition 3.9]{Die}.

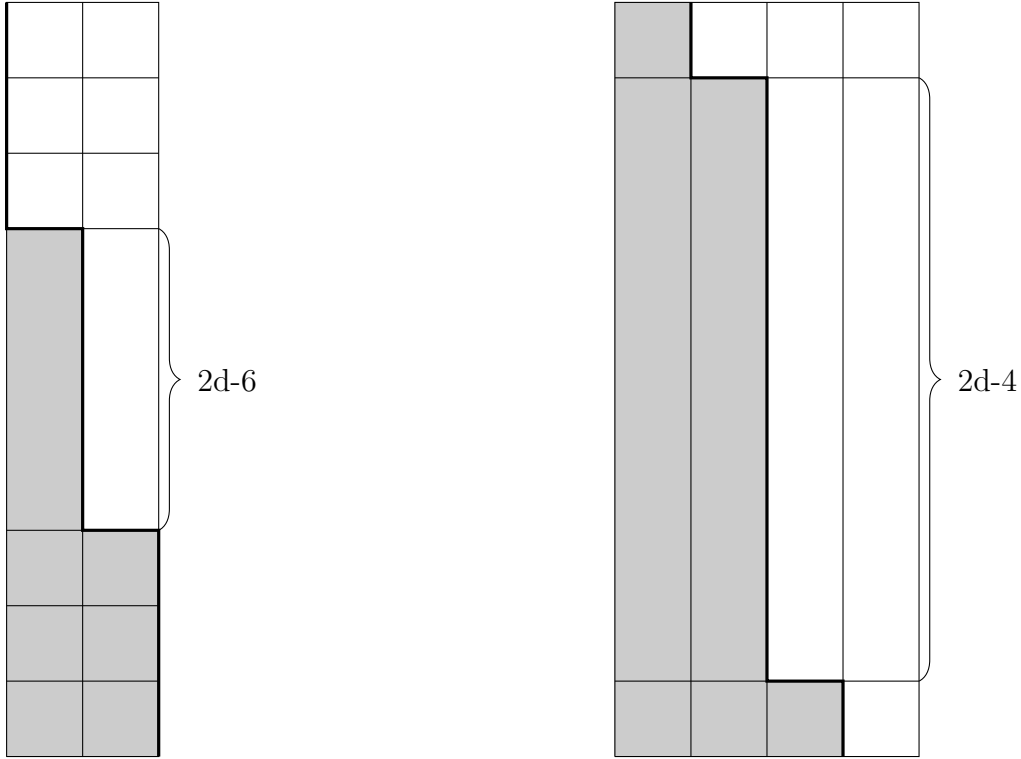
\begin{figure}[h]
\begin{center}
\begin{tikzpicture}
\filldraw[color = black!20!white] (-6,0) rectangle (-5,7);
\filldraw[color = black!20!white] (-5,0) rectangle (-4,3);
\draw[black] (-6,0) -- (-6,10) -- (-4,10) -- (-4,0) -- (-6,0);
\draw (-5,0) -- (-5,10);
\draw (-6,1) -- (-4,1);
\draw (-6,2) -- (-4,2);
\draw (-6,3) -- (-4,3);
\draw (-6,7) -- (-4,7);
\draw (-6,8) -- (-4,8);
\draw (-6,9) -- (-4,9);
\draw[very thick] (-6,10) -- (-6,7) -- (-5,7) -- (-5,3) -- (-4,3) -- (-4,0);
\draw[decorate,decoration={brace,amplitude=8pt}] (-4,7) -- (-4,3) node[midway,xshift=0.9cm] {2d-6};

\filldraw[color = black!20!white] (2,0) rectangle (3,10);
\filldraw[color = black!20!white] (3,0) rectangle (4,9);
\filldraw[color = black!20!white] (4,0) rectangle (5,1);
\draw[black] (2,0) -- (2,10) -- (6,10) -- (6,0) -- (2,0);
\draw (3,0) -- (3,10);
\draw (4,0) -- (4,10);
\draw (5,0) -- (5,10);
\draw (2,9) -- (6,9);
\draw (2,1) -- (6,1);
\draw[very thick] (3,10) -- (3,9) -- (4,9) -- (4,1) -- (5,1) -- (5,0);
\draw[decorate,decoration={brace,amplitude=8pt}] (6,9) -- (6,1) node[midway,xshift=0.9cm] {2d-4};
\end{tikzpicture}
\end{center}
\caption{The partition on the right of $2d\times 2$ blocks corresponds to $T_1$, the partition on the left of $(2d-2)\times 4$ blocks to $T_2$.}
\label{fig:GorensteinPartitions}
\end{figure}

We first consider $T_1$. The sequence of degrees of the generators for the minimal set $D_{min}$ is in this case different for $d=4$ and $d\geq 5$, namely $(4,4,4,4,6)$ for $d=4$ and $(d,d,d,d,d+1,\ldots,d+1)$ with $(2d-9)$ many generators of degree $d+1$ for $d\geq 5$, cf.~Remark \ref{Rem:DieselCom}. Since $q_i+p_i = 2d+3$, the degree matrices are
\[
\left(
\begin{array}[l]{ccccc}
0 & 3 & 3 & 3 & 1 \\
  & 0 & 3 & 3 & 1 \\
  &   & 0 & 3 & 1 \\
  &   &   & 0 & 1 \\
  &   &   &   & 0 \\
\end{array}\right),
\left(
\begin{array}[r]{cccccccc}
0 & 3 & 3 & 3 & 2 & \cdots & \cdots & 2 \\
  & 0 & 3 & 3 & \vdots &   &        & \vdots \\
  &   & 0 & 3 & \vdots &   &        & \vdots \\
  &   &   & 0 & 2 & \cdots & \cdots & 2 \\
  &   &   &   & 0 & 1      & \cdots & 1 \\
  &   &   &   &   & 0      & \ddots & \vdots \\
  &   &   &   &   &        & \ddots & 1 \\
  &   &   &   &   &        &        & 0
\end{array}\right)
\]
where the right one is of size $(2d-5)\times (2d-5)$. Every entry of the matrix can be generically chosen among the forms of the indicated degree and its Pfaffians will generate a Gorenstein ideal with Hilbert function $T_1$. Therefore, for $d=4$, we have $h(E_M)=6\dim(\C[\ul{x}]_3) + 4\dim(\C[\ul{x}]_1)=72$ and for $d\geq 5$ we have
\begin{eqnarray*}
h(E_M)& = & 6\dim(\C[\ul{x}]_3)+4(2d-9)\dim(\C[\ul{x}]_2) \\
& & + \binom{2d-9}{2}\dim(\C[\ul{x}]_1)\\
& = & 6d^2-9d-21.
\end{eqnarray*}
This is an overcount of the dimension of $\gor_{D_{min}}$ because for every choice of generators of a given ideal we get a matrix with these generators as Pfaffians. So for $d=4$, we choose a basis of a $4$-dimensional subspace of forms of degree $4$ and one generator of degree $6$ from a $\dim(\C[\ul{x}]_6) - T_1(2)=22$-dimensional space. Therefore we overcount the dimension of $\gor_{D_{min}}$ by at least $4^2+22=38$ and the dimension of $\gor(T)$ is at most $34$. Since $\dim(\P(\C[\ul{x}]_8^\ast)) = 44$, its codimension is at least $10$. For $d\geq 5$, we choose a basis of a $4$-dimensional subspace of forms of degree $d$ and $2d-9$ linearly independent generators from a space of dimension $\dim(\C[\ul{x}]_{d+1})-T_1(d-1)=2d+3$. The overcount in this case is at least $4^2+(2d-9)(2d+3)$ and the dimension of $\gor_{D_{min}}$ is at most $2d^2+3d-10$. The projective dimension of the space of middle Catalecticant matrices is $\dim(\C[\ul{x}]_{2d}^\ast)-1=2d^2+3d$, which again implies that the codimension of $\gor(T_1)$ is at least $10$. From the fact that it can be at most $10$, it follows that it is exactly $10$.

We now repeat the count for the Hilbert function $T_2$. In this case, $D_{min} = \{Q_{min},P_{min}\} = \{(d-1,d,d+1,d+1,\ldots,d+1),(d+4,d+3,d+2,d+2,\ldots,d+2)\}$ with $(2d-5)$ times the entry $d+1$ in $Q_{min}$ and $d+2$ in $P_{min}$, cf.~Figure \ref{fig:GorensteinPartitions}. Therefore, the degree matrix is
\[
\left(
\begin{array}[h]{cccccc}
0 & 4 & 3 & \cdots & \cdots & 3 \\
  & 0 & 2 & \cdots & \cdots & 2 \\
  &   & 0 & 1      & \cdots & 1 \\
  &   &   & 0      & \ddots & \vdots \\
  &   &   &        &        & 1 \\
  &   &   &        &        & 0
\end{array}\right)
\]
which is of size $(2d-3)\times (2d-3)$. We compute
\begin{eqnarray*}
h(E_M) & = & \dim(\C[\ul{x}]_4)+(2d-5)\dim(\C[\ul{x}]_3)+(2d-5)\dim(\C[\ul{x}]_2) \\
& & +\binom{2d-5}{2}\dim(\C[\ul{x}]_1) \\
& = & 6d^2-d-20.
\end{eqnarray*}
 Here we choose one generator of degree $d-1$, one generator of degree $d$ from a $4$-dimensional space and $(2d-5)$ generators from a $\dim(\C[\ul{x}]_{d+1})-T_2(d-1)=(2d+4)$-dimensional space. Therefore the dimension of $\gor(T_2)$ is at most $6d^2-d-20-4-(2d-5)(2d+4)=2d^2+d-4$. The codimension is at least $2d+4\geq 12$.
So $\gor(T_2)$ cannot be an irreducible component of $X_{-4}$ and we conclude that $\gor(T_2)\subset \cl(\gor(T_1))$.

In summary, $\gor(T_1)$ is a dense subset of $X_{-4}$ and $X_{-4}$ is irreducible (cf.~Diesel \cite[Theorem 2.7]{Die}) and has the expected codimension $10$ in the space of middle Catalecticant matrices. Therefore, the intersection $X_{-4}$ of the variety $S_{-4}$ of symmetric $N\times N$ matrices of corank at least $4$ and the linear space of Hankel matrices is generically transversal and hence preserves the degree, i.e.~$\deg(X_{-4}) = \deg(S_{-4})$. The degree of $S_{-4}$ was computed in Harris-Tu \cite[Proposition 12(b)]{HarrisTu} and is equal to
\[
\prod_{\alpha=0}^3 \binom{N+\alpha}{4-\alpha}/\binom{2\alpha+1}{\alpha}.
\]
\end{proof}

The tangent space to the quasiprojective variety $\gor(T)$ for a permissible Hilbert function $T$ at a Gorenstein ideal $I$ can be described in terms of the ideal. We identify $\C[\ul{x}]_m$ with its dual space by using the apolar bilinear form, i.e.~we identify a monomial $x^\alpha\in\C[\ul{x}]_m$ with the linear form $p\mapsto \frac{1}{\alpha!} \frac {\partial^{|\alpha|}}{\partial x^\alpha}p$ that takes a polynomial $p=\sum p_\beta x^\beta$ to $p_\alpha$. Using this identification, we can state a characterisation of the tangent space to $\gor(T)$ at an ideal $I$ in terms of this ideal.
\begin{Thm}[Iarrobino-Kanev {\cite[Theorem 3.9 and 4.21]{IarKanMR1735271}}]
\label{Thm:GorTTangentSpace}
Let $T$ be a permissible Hilbert function. The quasiprojective variety $\gor(T)$ is smooth.
Let $\ell\in\C[\ul{x}]_m^\ast$ be a linear functional such that the corresponding Gorenstein ideal $I=I(\ell)$ has Hilbert function $T$. Then the tangent space to $\gor(T)$ at $\ell$ is
\[
( (I^2)_m)^\perp\subset \C[\ul{x}]_m.
\]
\end{Thm}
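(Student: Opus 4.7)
The plan is to separate the two assertions of the theorem: first compute the Zariski tangent space from the defining rank conditions of $\gor(T)$, and then upgrade this to smoothness via a dimension count using Diesel's parametrization.

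For the tangent space computation I would work on the affine cone, treating $\gor(T)$ as scheme-theoretically cut out of $\C[\ul{x}]_m^\ast$ by the equations $\rk B_{\ell,u,m-u} = T(u)$ for $0\leq u \leq m$. Given a direction $\ell' \in \C[\ul{x}]_m^\ast$, the first-order perturbation $\ell+t\ell'$ lies in $\gor(T)$ modulo $t^2$ exactly when each Catalecticant $B_{\ell,u,m-u} + t B_{\ell',u,m-u}$ has rank at most $T(u)$ to first order. The standard linear-algebra criterion is that a first-order perturbation of a rank-$r$ matrix preserves rank at most $r$ if and only if the perturbation vanishes on the product of the left and right kernels. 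By Remark~\ref{Rem:SymHilbFunc} these kernels are $I_u$ and $I_{m-u}$, so the condition becomes $\ell'(I_u \cdot I_{m-u})=0$. Intersecting these constraints over all $u$ yields
\[
T_\ell \gor(T) \;=\; \bigcap_{u+v=m}(I_u\cdot I_v)^\perp \;=\; \Bigl(\sum_{u+v=m} I_u\cdot I_v\Bigr)^\perp \;=\; ((I^2)_m)^\perp,
\]
once we interpret $\C[\ul{x}]_m^\ast$ as $\C[\ul{x}]_m$ via the apolar pairing. Note that $\ell$ itself lies in this space because $(I^2)_m \subset I_m \subset \ker\ell$, consistent with working on the affine cone over the projective variety.

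The remaining content is smoothness, which I would establish by a dimension count: if $\dim \gor(T)$ equals $\dim((I^2)_m)^\perp-1$ (the $-1$ accounting for the scalar direction), then the tangent-space containment above is forced to be an equality and $\gor(T)$ is smooth at $\ell$. The dimension of $\gor(T)$ comes from the Buchsbaum-Eisenbud/Diesel parametrization recalled in Remark~\ref{Rem:DieselCom}: $\gor_{D_{\min}}$ is a dominant rational image of the affine space $\A^{h(E_M)}$ of skew-symmetric matrices with entries of prescribed degrees, so its dimension is $h(E_M)$ minus the dimension of the fibers (the choice of generators of the ideal). On the other side, $\dim((I^2)_m)^\perp$ is $\dim \C[\ul{x}]_m - \dim (I^2)_m$, and the Hilbert series of $R/I^2$ is computable from the Buchsbaum-Eisenbud resolution of $I$.

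The main obstacle will be this dimension match, which amounts to an unobstructedness statement: every infinitesimal deformation of $\ell$ in a direction $\ell' \in ((I^2)_m)^\perp$ is integrable to a genuine curve in $\gor(T)$. The conceptual leverage is that a codimension-$3$ Gorenstein ideal is the Pfaffian ideal of a skew-symmetric matrix, so infinitesimal deformations can always be realised by perturbing the entries of such a matrix. Verifying that generic perturbations preserve the combinatorics of degrees in $D_{\min}$, and that the resulting dimensions on both sides agree, is the technical core where the explicit enumeration in Diesel's theorem is essential.
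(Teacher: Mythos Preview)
The paper does not prove this theorem; it is quoted from Iarrobino--Kanev \cite[Theorems 3.9 and 4.21]{IarKanMR1735271} and used as a black box. So there is no ``paper's own proof'' to compare against, and your proposal should be read on its own merits.

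Your computation of the Zariski tangent space to the scheme cut out by the Catalecticant minors is correct and is exactly the easy half: the condition that $B_{\ell+t\ell',u,m-u}$ stay of rank $\leq T(u)$ to first order is precisely $\ell'(I_u\cdot I_{m-u})=0$, and intersecting over $u$ gives $((I^2)_m)^\perp$. This establishes the containment $T_\ell\gor(T)\subseteq ((I^2)_m)^\perp$ (or equality if one endows $\gor(T)$ with the determinantal scheme structure, as you do).

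The substantive content of the theorem is entirely in the other direction, and here your proposal is an outline rather than a proof. You correctly identify the strategy---match $\dim\gor(T)$ against $\dim\C[\ul{x}]_m-\dim(I^2)_m$ using Diesel's Pfaffian parametrization on one side and the Buchsbaum--Eisenbud resolution on the other---and you correctly flag the dimension match as ``the main obstacle.'' But you do not carry it out: the assertion that ``the Hilbert series of $R/I^2$ is computable from the Buchsbaum--Eisenbud resolution of $I$'' hides a nontrivial computation of $\mathrm{Tor}^R_\bullet(R/I,R/I)$, and the claimed agreement with $h(E_M)$ minus the fiber dimension is exactly the unobstructedness statement that needs proof. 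In Iarrobino--Kanev this is handled (following Kleppe) by showing directly that every infinitesimal deformation of a height-$3$ Gorenstein ideal arises from a deformation of its alternating Buchsbaum--Eisenbud matrix, hence integrates; your last paragraph gestures at this mechanism but does not supply it. As written, the proposal is a sound plan with the decisive step left open.
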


The irreducible variety $X_{-4}$ of middle Catalecticant matrices of corank at least $4$ is defined over $\Q$ and we will later show that it has a smooth rational point, i.e.~a point with rational coordinates. Therefore, the real points of $X_{-4}$ are Zariski-dense in it and the above statement of Theorem \ref{Thm:GorTTangentSpace} also applies to real points of $X_{-4}$, cf.~\cite[Section 2.8]{BochnakMR1659509}.

\section{Extreme Rays of Maximal Rank and Positive Gorenstein Ideals}\label{sec:ExtRays}
In this section, we recapitulate bounds on the rank of Hankel matrices of extreme rays of $\Sigma_{2d}^\vee$ which are not point evaluations. The lower bound and its tightness are proved in Blekherman \cite[Theorem 2.1]{BlePositiveGorensteinIdeals}. We constructively establish tightness of the upper bound. We show that the Zariski closure of the set of extreme rays is the variety of Hankel matrices of corank at least $4$, which is irreducible; in particular, it is (at least set-theoretically) defined by the symmetric $r\times r$ minors of the generic Hankel matrix, where $r=\binom{d+2}{2}-3$.

To a linear functional $\ell \in \R[\ul{x}]_m^\ast$, we associate the bilinear form 
\[
B_{\ell}\colon 
\left\{
\begin{array}[h]{rcl}
\R[\ul{x}]_d\times\R[\ul{x}]_d & \to & \R \\
(p,q) & \mapsto & \ell(pq),
\end{array}\right.
\]
whose representing matrix with respect to the monomial bases is called the \emph{Hankel matrix} of $\ell$.

One of the main results of Blekherman is a characterisation of extreme rays of $\Sigma_{2d}^\vee$ by the associated Gorenstein ideals.
\begin{Prop}[Blekherman {\cite[Lemma 2.2]{BleNP}} and {\cite[Proposition 4.2]{BlePositiveGorensteinIdeals}}]
\label{Prop:ExtremeRays}
(a) A linear functional $\ell\in\R[\ul{x}]_{2d}^\ast$ spans an extreme ray of $\Sigma_{2d}^\vee$ if and only if the bilinear form $B_\ell$ is positive semi-definite and the degree $d$ part $I(\ell)_d$ of the Gorenstein ideal $I(\ell)$ is maximal with respect to inclusion over all Gorenstein ideals with socle degree $2d$.\\
(b) Let $I$ be a Gorenstein ideal with socle degree $2d$. Then $I_d$ is maximal with respect to inclusion over all Gorenstein ideals with socle degree $2d$ if and only if the degree $2d$ part of the ideal generated by $I_d$ is a hyperplane in $\R[\ul{x}]_{2d}$. In this case, it is equal to $I_{2d}$.
\end{Prop}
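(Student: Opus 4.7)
The plan is to prove (b) first and then derive (a) from it, using the standard extremality criterion for spectrahedra together with the identification $\ker B_\ell = I(\ell)_d$ from Remark \ref{Rem:SymHilbFunc}.

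For (b), set $J = (I_d)$, so $J_{2d} = I_d \cdot \R[\ul{x}]_d \subseteq I_{2d}$. My starting observation is the bijection: Gorenstein ideals $I'$ of socle degree $2d$ with $I'_d \supseteq I_d$ correspond, up to scaling of the socle, to nonzero elements $\ell' \in J_{2d}^\perp$, since $\ell'$ annihilates $J_{2d}$ iff $I_d \subseteq I(\ell')_d$. Under this correspondence, the strict inclusion $I(\ell')_d \supsetneq I_d$ is equivalent to the bilinear form $B_{\ell'}$ descending to a degenerate form on $V := \R[\ul{x}]_d/I_d$. The backward direction is then immediate: if $J_{2d}$ is a hyperplane, then $J_{2d} = I_{2d}$ (both of codimension one) and $J_{2d}^\perp = \R\ell_0$, so $I$ is the only Gorenstein ideal with $I'_d \supseteq I_d$, establishing maximality.

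For the forward direction of (b), assume $I_d$ is maximal. Then every nonzero $\ell \in J_{2d}^\perp$ must satisfy $I(\ell)_d = I_d$, i.e.\ $B_\ell|_V$ is nondegenerate. The map $\ell \mapsto \det(B_\ell|_V)$ is a homogeneous polynomial of degree $\dim V$ on $J_{2d}^\perp$, nonzero at $\ell_0$ and therefore not identically zero. If $\dim J_{2d}^\perp \geq 2$, then working over $\C$ its zero locus is a proper nonempty hypersurface, producing a nonzero $\ell$ with $I(\ell)_d \supsetneq I_d$ and contradicting maximality. To push this through over $\R$ in the PSD setting of (a), I would use that $B_{\ell_0}|_V$ is positive definite, so the generalised eigenvalue problem for the pencil $B_{\ell_0} + tB_{\ell_1}$ has real roots, exhibiting a real $\ell$ where the determinant vanishes.

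For (a), I would use the standard fact that $\ell \in \Sigma_{2d}^\vee$ is extreme iff the linear span of its smallest spectrahedral face is one-dimensional; this linear span equals $\{\ell' \in \R[\ul{x}]_{2d}^\ast : \ker B_{\ell'} \supseteq \ker B_\ell\} = J_{2d}^\perp$ with $J = (I(\ell)_d)$. So $\ell$ is extreme iff $J_{2d}$ is a hyperplane, which by (b) is iff $I(\ell)_d$ is maximal. Equivalently, a direct decomposition argument works: any PSD decomposition $\ell = \ell_1+\ell_2$ yields $\ker B_\ell \subseteq \ker B_{\ell_i}$ and hence $I(\ell)_d \subseteq I(\ell_i)_d$; maximality forces equality, and (b) then gives $\ker \ell_i = \ker \ell$, so $\ell_i$ is proportional to $\ell$. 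The main obstacle is the forward direction of (b): going from the abstract maximality of $I_d$ to the codimension-one-ness of $J_{2d}$ requires exhibiting a Gorenstein ideal with strictly larger degree-$d$ part whenever $\dim J_{2d}^\perp \geq 2$. Over $\C$ this reduces to a dimension count on a hypersurface, but the fully real statement leans on the positivity structure supplied by the PSD condition in (a) via generalised eigenvalues.
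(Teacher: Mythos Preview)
The paper does not prove this proposition; it is quoted from Blekherman's earlier work with no argument given, so there is no in-paper proof to compare against.

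Your approach is sound. The bijection between nonzero $\ell'\in J_{2d}^\perp$ and Gorenstein ideals with $I(\ell')_d\supseteq I_d$, the backward direction of (b), and the derivation of (a) from the one-dimensionality of $J_{2d}^\perp$ via the face structure of the spectrahedron are all correct and standard.

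The obstacle you flag in the forward direction of (b) is genuine over $\R$: there exist two-dimensional real pencils of symmetric matrices containing no nonzero degenerate element (for instance the span of $\begin{smallmatrix}1&0\\0&-1\end{smallmatrix}$ and $\begin{smallmatrix}0&1\\1&0\end{smallmatrix}$, whose generic member has determinant $-a^2-b^2$), so the determinant-hypersurface argument does not by itself produce a real degeneration. There are two clean ways to close this. First, one may read ``all Gorenstein ideals'' in (b) as complex Gorenstein ideals, which is consistent with Definition~\ref{Def:Gorenstein}; then your hypersurface argument in $\P(J_{2d}^\perp\otimes\C)$ is complete as written. Second, for the use in (a) one has the PSD hypothesis, and your generalised-eigenvalue fix is exactly right: if $B_{\ell_0}|_V$ is positive definite and $\ell_1\in J_{2d}^\perp$ is independent of $\ell_0$, then $\det(B_{\ell_0+t\ell_1}|_V)$ is a real polynomial in $t$ whose roots are the (necessarily real) generalised eigenvalues, giving a real $\ell'$ with $I(\ell')_d\supsetneq I_d$ and contradicting maximality. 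Either route makes your argument complete for every use the paper makes of this proposition.
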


Lower bounds on the ranks for extreme rays were established by Blekherman.
\begin{Thm}[Blekherman {\cite[Theorem 2.1]{BlePositiveGorensteinIdeals}}]\label{Thm:ExtremeRays}
Let $d\geq 3$ and $\ell\in \Sigma_{2d}^\vee$ and suppose $\R_+\ell$ is an extreme ray. Then the rank $r$ of $B_\ell$ is $1$, in which case $\ell$ is a point evaluation, or its rank is at least $3d-2$.
These bounds are tight and extreme rays $\Sigma_{2d}^\vee$ of rank $3d-2$ can be explicitly constructed.
\end{Thm}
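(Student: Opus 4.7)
The theorem is a restatement of \cite[Theorem 2.1]{BlePositiveGorensteinIdeals}, so the plan is to invoke that result and sketch the ideas in the language set up in the present paper. Break the statement into three assertions: (A) a rank-$1$ extreme ray is a point evaluation; (B) any extreme ray of rank $r>1$ satisfies $r\geq 3d-2$; (C) the bound $3d-2$ is attained.

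For (A), observe that if $B_\ell$ is positive semi-definite of rank $1$, then $B_\ell = vv^T$ for a vector $v=(v_\alpha)$ indexed by degree-$d$ monomials. The Hankel condition $\ell(x^\alpha x^\beta)=v_\alpha v_\beta$ depends only on $\alpha+\beta$, and a short multiplicative-consistency argument forces $v_\alpha = c\,p^\alpha$ for some $p\in\R^3$, so $\ell\in\R_+\ev_p$. This is essentially the statement that the rank-$1$ locus in the ambient symmetric matrix space meets the Hankel subspace exactly in the image of the degree-$2d$ Veronese of $\PP^2$.

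For (B), use Proposition \ref{Prop:ExtremeRays}: the degree $d$ part $I(\ell)_d$ is maximal with respect to inclusion among degree-$d$ components of Gorenstein ideals with socle degree $2d$, and the ideal it generates meets $\R[\ul{x}]_{2d}$ in a hyperplane. Translating, the Hilbert function $T$ of $A=\R[\ul{x}]/I(\ell)$ satisfies $T(d)=r$, and by Remark \ref{Rem:SymHilbFunc} it is symmetric around $d$. One then enumerates permissible Hilbert functions with $1<T(d)<3d-2$ via Diesel's combinatorial classification (as in the proof of Lemma \ref{Lem:Corank4dim}). For each such $T$, the associated Gorenstein ideal is forced to contain curves with positive-dimensional common zero locus or to factor through the ideal of a nontrivial real subscheme; in either case one exhibits a second PSD functional strictly inside the face of $\Sigma_{2d}^\vee$ cut out by $\ell$, contradicting extremality. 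The residual case $T(d)=1$ leads back to (A) by Proposition \ref{Prop:ExtremeRays}(b).

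For (C), construct the extreme ray explicitly: choose two general ternary forms $f_1,f_2$ of degree $d$ whose complete intersection $Z\subset\PP^2(\R)$ consists of $d^2$ distinct real points, and apply the Cayley--Bacharach theorem to select positive weights on a suitable subset of $Z$ giving a linear functional $\ell$ whose Gorenstein ideal has the target Hilbert function; a direct verification with the apolar pairing shows $B_\ell\succeq 0$ with rank $3d-2$, and maximality of $I(\ell)_d$ follows by exhibiting the supporting hyperplane of Proposition \ref{Prop:ExtremeRays}(b).

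The main obstacle is (B): turning the abstract maximality condition of Proposition \ref{Prop:ExtremeRays} into the numerical bound $r\geq 3d-2$. This is where the combinatorics of permissible Gorenstein Hilbert functions interacts with positive semi-definiteness, and it is the content that we simply import from \cite[Theorem 2.1]{BlePositiveGorensteinIdeals}; the later sections of the present paper only rely on the statement of the bound together with the construction in (C).
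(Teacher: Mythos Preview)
The paper does not prove this theorem at all; it is quoted from \cite{BlePositiveGorensteinIdeals} and used as a black box. In that sense your proposal---which ultimately says ``invoke the reference''---matches the paper exactly, and the honest final paragraph is the correct stance.

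Your sketch for (C), however, is off in a concrete way. With two forms $f_1,f_2$ of degree $d$, the ideal of the $d^2$ intersection points in degree $d$ is just $\lspan(f_1,f_2)$, so for $d\geq 4$ the point evaluations on $\R[\ul{x}]_d$ satisfy $d^2-\binom{d+2}{2}+2>1$ independent relations and the single-relation Cauchy--Schwarz trick from Lemma~\ref{Lem:ExtremeRaysC4} does not apply as stated; even where it does (for $d=3$), the rank one obtains is $\binom{d+2}{2}-3$, which equals $3d-2$ only by the numerical coincidence $d=3$. The actual construction in \cite{BlePositiveGorensteinIdeals} (reproduced for $d=5$ in the rank-$13$ proposition of Section~\ref{sec:Dextics}) instead intersects a degree-$d$ curve with a \emph{cubic}: the resulting $3d$ points satisfy exactly one Cayley--Bacharach relation on degree-$d$ forms, the kernel of $B_\ell$ has dimension $1+\binom{d-1}{2}+1$, and the rank is $3d-2$.

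Your outline for (B) is too schematic to assess and does not reflect the mechanism in \cite{BlePositiveGorensteinIdeals} (the lower bound there does not proceed by enumerating Diesel's partitions), but since you explicitly import that step, the overall proposal remains logically sound.
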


From Blekherman's work, we can easily deduce an upper bound.
\begin{Thm}\label{Thm:MaxRank}
Let $\ell\in\Sigma_{2d}^\vee$, $d\geq 4$ and suppose $\R_+\ell$ is an extreme ray. The rank of $B_\ell$ is at most $\binom{d+2}{2}-4$, i.e.~the corank is at least $4$.
\end{Thm}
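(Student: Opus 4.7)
The plan is to argue by contradiction. Suppose $\R_+\ell$ is extreme and $r := \dim I(\ell)_d \leq 3$. By Proposition~\ref{Prop:ExtremeRays}(b), extremality forces the degree-$2d$ part of the ideal generated by $V := I(\ell)_d$ to be a hyperplane in $\R[\ul{x}]_{2d}$, hence of dimension $\binom{2d+2}{2}-1$. After choosing a basis $f_1,\dots,f_r$ of $V$, this part is precisely the image of the multiplication map
\[
\mu\colon V \otimes_\R \R[\ul{x}]_d \to \R[\ul{x}]_{2d}, \qquad (g_1,\dots,g_r) \mapsto \sum_{i=1}^r f_i g_i,
\]
so the strategy reduces to upper-bounding $\dim(\mathrm{im}\,\mu)$ and contradicting this hyperplane condition.

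The key observation is that commutativity of $\R[\ul{x}]$ supplies $\binom{r}{2}$ tautological elements of $\ker \mu$: for each pair $i<j$, the tuple with $f_j$ in slot $i$, $-f_i$ in slot $j$, and $0$ elsewhere. These are linearly independent (their supports are distinct and the $f_i$ are nonzero), so
\[
\dim(\mathrm{im}\,\mu) \;\leq\; r\binom{d+2}{2} - \binom{r}{2}.
\]

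It then remains to verify the arithmetic inequality
\[
r\binom{d+2}{2} - \binom{r}{2} \;<\; \binom{2d+2}{2} - 1 \qquad \text{for } d\geq 4 \text{ and } 0\leq r\leq 3.
\]
The decisive case $r=3$ rearranges to $(d+1)(4-d) < 4$, which holds strictly for $d\geq 4$ but becomes an equality at $d=3$ (matching the rank-$7$ extreme rays for ternary sextics from \cite{BleNP}); the cases $r\leq 2$ leave much more slack. The resulting contradiction forces $\dim I(\ell)_d \geq 4$, i.e.~$\rk(B_\ell) \leq \binom{d+2}{2}-4$. There is no real obstacle in this proof; the essential insight is simply that the trivial commutator syzygies of three degree-$d$ forms already rule out low corank once $d\geq 4$, which is exactly why the bound is new only beyond Hilbert's sextic regime.
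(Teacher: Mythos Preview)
Your proof is correct and follows essentially the same approach as the paper: bound the dimension of $\langle I(\ell)_d\rangle_{2d}$ and contradict the hyperplane condition from Proposition~\ref{Prop:ExtremeRays}(b). The only difference is cosmetic---the paper uses the crude bound $r\binom{d+2}{2}$ for $d\geq 5$ and invokes the three Koszul syzygies only in the borderline case $d=4$, whereas you subtract $\binom{r}{2}$ uniformly; one small caveat is that your parenthetical justification (``supports are distinct and the $f_i$ are nonzero'') does not by itself force linear independence of the Koszul tuples---you need that $f_1,\dots,f_r$ are linearly \emph{independent}, which of course they are.
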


\begin{proof}
Since $\R_+\ell$ is an extreme ray, we know that the degree $2d$ part of the ideal generated by $I(\ell)_d$ is a hyperplane in the space of forms of degree $2d$.
The dimension of the space $\R[\ul{x}]_d I(\ell)_d$ is bounded by $\dim(\R[\ul{x}]_d) \dim(I(\ell)_d)=\binom{d+2}{2}\crk(B_\ell)$. In case $\crk(B_\ell)\leq 3$ and $d\geq 5$, this bound is smaller than the dimension $\binom{2d+2}{2}-1$ of a hyperplane in $\R[\ul{x}]_{2d}$. The case $\crk(B_\ell) \leq 3$ and $d=4$ needs a more precise count: Suppose that $\crk(B_\ell)=3$ and the kernel of $B_\ell$ is generated by $f_1,f_2,f_3$. Then the dimension of the space $\R[\ul{x}]_4 I(\ell)_4$ is bounded by $3\dim(\R[\ul{x}]_4)-3 = 42<45-1 = \dim(\R[\ul{x}]_8) -1$ because there are the $3$ obvious relations, namely $f_if_j-f_jf_i = 0$ for $i\neq j$. 
\end{proof}

\begin{Rem}
The upper bound in the case $d=3$ is corank $3$, which agrees with the lower bound.
\end{Rem}

A main tool in this section is the Cayley-Bacharach Theorem.
\begin{Thm}[Cayley-Bacharach, cf.~Eisenbud-Green-Harris {\cite[CB5]{EisGreHar}}]
Let $X_1,X_2\subset\P^2$ be plane curves defined over $\R$ of degree $d$ and $e$ intersecting in $d\cdot e$ points. Set $s= d+e-3$ and decompose $X_1\cap X_2 = \Gamma_1\cup \Gamma_2$ into two disjoint sets defined over $\R$. Then for all $k\leq s$, the following equality holds
\begin{eqnarray*}
& \dim(\I(\Gamma_1)_k) - \dim(\I(X_1\cap X_2)_k) = \\
& |\Gamma_2| - \dim \lspan \{ {\rm Re}\ev_x, {\rm Im}\ev_x \in\R[\ul{x}]_{s-k}^\ast\colon x\in\Gamma_2\}.
\end{eqnarray*}
The left hand side is the dimension of the space of forms of degree $k$ vanishing on $\Gamma_1$ modulo the subspace of forms vanishing in every point of $X_1\cap X_2$. The right hand side is the linear defect of point evaluations on forms of dual degree $s-k$ at points of $\Gamma_2$.
\end{Thm}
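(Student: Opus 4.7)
The plan is to deduce this real version from the standard complex Cayley–Bacharach statement of Eisenbud–Green–Harris~\cite[CB5]{EisGreHar}, which asserts that over $\C$ one has
\[
\dim_\C(\I(\Gamma_1)_k) - \dim_\C(\I(X_1\cap X_2)_k) = |\Gamma_2| - h_{\Gamma_2}(s-k),
\]
where $h_{\Gamma_2}(s-k)$ is the value at $s-k$ of the Hilbert function of $\Gamma_2$, i.e.~the $\C$-dimension of the image of the evaluation map $\C[\ul{x}]_{s-k}\to \C^{\Gamma_2}$, or equivalently the $\C$-dimension of $\lspan_\C\{\ev_x\colon x\in\Gamma_2\}\subset \C[\ul{x}]_{s-k}^\ast$.

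The first step is to check that the left-hand side of the real identity coincides with the left-hand side of the complex identity. Because $X_1$, $X_2$, $\Gamma_1$, and $X_1\cap X_2$ are all defined over $\R$, the ideals $\I(\Gamma_1)_k$ and $\I(X_1\cap X_2)_k$ are defined over $\R$, so by flat base change from $\R$ to $\C$ their real and complex dimensions agree. Hence the left-hand sides of the two identities match without any change.

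The second step is to rewrite $h_{\Gamma_2}(s-k)$ as a real linear defect. Since $\Gamma_2$ is defined over $\R$ as a subscheme of $\PP^2$, it is stable under complex conjugation, so the $\C$-span $V:=\lspan_\C\{\ev_x\colon x\in\Gamma_2\}\subset \C[\ul{x}]_{s-k}^\ast$ is stable under conjugation. A conjugation-stable complex subspace $V$ of a complexified real vector space satisfies $\dim_\C V = \dim_\R(V\cap \R[\ul{x}]_{s-k}^\ast \cdot (1\oplus i))$, or more concretely, real and imaginary parts of generators of $V$ span, over $\R$, a subspace of the same dimension as $\dim_\C V$. Applied to pairs $\{x,\bar x\}\subset \Gamma_2$ of genuinely complex points and to points $x\in \Gamma_2\cap \P^2(\R)$ separately, this yields
\[
h_{\Gamma_2}(s-k) = \dim_\R \lspan_\R\{\mathrm{Re}\,\ev_x,\, \mathrm{Im}\,\ev_x\colon x\in\Gamma_2\},
\]
which is precisely the right-hand side defect in the statement.

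Combining the two steps gives the desired equality. The main technical point, and the only step that goes beyond citing \cite{EisGreHar}, is the careful bookkeeping of the conjugation-stable $\C$-span of point evaluations in the second step; one must also be mindful of the schematic (i.e.~possibly multiple) structure at complex points so that the counts $|\Gamma_2|$ and $h_{\Gamma_2}$ are taken with the correct multiplicities, but since the statement assumes $X_1$ and $X_2$ meet in $d\cdot e$ (reduced) points the bookkeeping reduces to the elementary observation above.
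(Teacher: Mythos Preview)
The paper does not give its own proof of this theorem; it is stated as a cited result (with the reference to Eisenbud--Green--Harris, CB5) and then used as a black box. Your reduction to the complex statement is correct and is exactly the justification implicit in the paper's ``cf.'' citation: the base-change argument for the left-hand side is immediate since $\Gamma_1$ and $X_1\cap X_2$ are defined over $\R$, and for the right-hand side the conjugation-stability of $\Gamma_2$ makes the complex span of the $\ev_x$ a complexified real subspace whose real form is spanned by the $\mathrm{Re}\,\ev_x$ and $\mathrm{Im}\,\ev_x$.
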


Probably the most famous instance of this theorem is the following application to the complete intersection of two cubic curves, stated here for a totally real intersection.
\begin{Exm}
Suppose $X_1,X_2\subset\P^2$ are plane cubic curves intersecting in $9$ points. Then $d=e=3$ and so $s=3$. Pick $\Gamma_2 = \{P\}$ for any intersection point $P$ and put $\Gamma_1 = (X_1\cap X_2) \setminus \{P\}$. Let us consider $k=3$ and compute the right hand side of the Cayley-Bacharach equality: Since $\dim \lspan\{\ev_x\in\R[\ul{x}]_0^\ast\colon x\in\Gamma_2\} = 1$, we conclude
\[
\dim(\I(\Gamma_1)_3) - \dim(\I(X_1\cap X_2)_3) = 0,
\]
which means that every cubic form that vanishes in the $8$ points of $\Gamma_1$ also vanishes at the ninth point $P$ of the intersection. In other words, the point evaluation $\ev_P\in\R[\ul{x}]_3^\ast$ lies in the subspace $U_{\Gamma_1}$ spanned by the eight point evaluations $\{\ev_x\in\R[\ul{x}]_3^\ast\colon x\in\Gamma_1\}$. The annihilator of $U_{\Gamma_1}$ is the $2$-dimensional subspace of $\R[\ul{x}]_3$ spanned by the defining equations of $X_1$ and $X_2$. Since this is true for any point $P\in X_1\cap X_2$, we conclude, that there is a unique linear relation among the point evaluations $\{\ev_x\in\R[\ul{x}]_3^\ast\colon x\in X_1\cap X_2\}$ and all coefficients of this relation are non-zero.
\end{Exm}

Using the Cayley-Bacharach Theorem, we will first show that there are extreme rays of corank $4$ under the following constraint on the degree. We will get rid of this constraint in Lemma \ref{Lem:ExRayC4Per}.
\begin{Const}\label{Const:ellipse}
Let $d\geq 4$. There is a unique conic $C$ going through the following six points in the plane: $(0,0),(1,0),(0,1),(d-1,d-1),(d-2,d-1),(d-1,d-2)$; its equation is given by $C = \V(x^2+y^2-\frac{2(d-2)}{d-1}xy-x-y)$. From now on, we assume that this conic does not go through any other integer point. The only exceptional cases in the interval $\{4,5,\ldots,100\}$ are: $9,19,21,29,33,34,36,40,49,51,57,61,73,78,79,81,89,99$.
\end{Const}

\begin{Prop}\label{Prop:CB}
Set $L_1=\prod_{j=0}^{d-1}(x-jz)$ and $L_2=\prod_{j=0}^{d-1}(y-jz)$ and let $\Gamma=\V(L_1,L_2)=\{(j:k:1)\colon j,k=0,\ldots,d-1\}$ be the intersection of their zero sets in $\P^2$. Split these points into
\[
\Gamma_2 = \{(x:y:1)\colon x+y=2\}\cup\bigcup_{j=1}^{d-4}\{(x:y:1)\colon x+y=d+j\}
\]
and $\Gamma_1=\Gamma\setminus\Gamma_2$.
Then there is a unique linear relation $\sum_{v\in\Gamma_1} u_v{\rm ev}_v=0$ among the point evaluations on forms of degree $d$ at points of $\Gamma_1$ and all coefficients $u_v\in\R$ in this relation are non-zero.
The set of all forms of degree $d$ vanishing on $\Gamma_1$ is a $3$-dimenional space spanned by $L_1,L_2$ and a form $p$ which is non-zero at any point of $\Gamma_2$.
\end{Prop}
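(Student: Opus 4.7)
The plan is to apply the Cayley-Bacharach theorem to the complete intersection $\Gamma=X_1\cap X_2$ of the two degree-$d$ curves $X_i=\V(L_i)$, with $k=d$ and hence dual degree $s-k=(2d-3)-d=d-3$. Since $\Gamma$ is a complete intersection of type $(d,d)$, its Koszul resolution yields $\dim\I(\Gamma)_d=2$, with basis $L_1,L_2$. A direct count of diagonals gives $|\Gamma_2|=3+\sum_{j=1}^{d-4}(d-1-j)=\binom{d-1}{2}=\dim\R[\ul{x}]_{d-3}$, so Cayley-Bacharach reduces to
\[
\dim\I(\Gamma_1)_d=2+\dim\I(\Gamma_2)_{d-3}.
\]

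The heart of the argument is to prove $\dim\I(\Gamma_2)_{d-3}=1$, with the explicit spanning form $q_0=(x+y-2z)\prod_{j=1}^{d-4}(x+y-(d+j)z)$; this clearly lies in $\I(\Gamma_2)_{d-3}$ because $\Gamma_2$ is carried by the $d-3$ diagonals it cuts out. Uniqueness follows by peeling: any $q\in\I(\Gamma_2)_{d-3}$, restricted to the line $x+y=(d+1)z$, has degree $d-3$ but must vanish at the $d-2$ points of $\Gamma_2$ on that line, forcing $x+y-(d+1)z$ as a factor. At each subsequent step $k=2,\ldots,d-4$, the diagonal $x+y=(d+k)z$ still carries $d-1-k$ points while the current form has degree $d-2-k$, so the same argument extracts the factor $x+y-(d+k)z$. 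The remaining degree-$1$ form must vanish on the three collinear points $(0,2,1),(1,1,1),(2,0,1)$ on $x+y=2z$ and is therefore a scalar multiple of $x+y-2z$.

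With $\dim\I(\Gamma_1)_d=3$ established, I construct the third generator explicitly as $p=C\cdot\prod_{j=3}^d(x+y-jz)$, where $C$ is the conic from Constraint~\ref{Const:ellipse}. The product of linear factors kills the points of $\Gamma_1$ on the diagonals $x+y=3z,\ldots,dz$, while $C$ kills the six remaining points of $\Gamma_1$, namely those on diagonals $0,1,2d-3,2d-2$. Constraint~\ref{Const:ellipse} guarantees that $C$ does not vanish at any other integer point, so $p$ is non-zero at every point of $\Gamma_2$. Consequently $p\notin\I(\Gamma)_d$, and $\{L_1,L_2,p\}$ is a basis of $\I(\Gamma_1)_d$.

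Finally, the dimension count $|\Gamma_1|-\binom{d+2}{2}+\dim\I(\Gamma_1)_d=1$ shows that the space of relations $\sum_{v\in\Gamma_1}u_v\ev_v=0$ on $\R[\ul{x}]_d$ is one-dimensional. To verify that $u_{v_0}\neq 0$ for every $v_0\in\Gamma_1$, I apply Cayley-Bacharach once more to the splitting $(\Gamma_1\setminus\{v_0\},\Gamma_2\cup\{v_0\})$: because $v_0$ lies on a diagonal outside $\{2,d+1,\ldots,2d-4\}$, we have $q_0(v_0)\neq 0$, so $\dim\I(\Gamma_2\cup\{v_0\})_{d-3}=0$, and Cayley-Bacharach yields $\dim\I(\Gamma_1\setminus\{v_0\})_d=3=\dim\I(\Gamma_1)_d$. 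Thus $\ev_{v_0}$ lies in the span of $\{\ev_v:v\in\Gamma_1\setminus\{v_0\}\}$, which produces a relation with $u_{v_0}\neq 0$, and by uniqueness the canonical relation has $u_{v_0}\neq 0$ for every $v_0\in\Gamma_1$. The main obstacle in this plan is the peeling uniqueness argument for $q_0$; it rests on the numerical fact that every diagonal meeting $\Gamma_2$ contains strictly more points than the residual degree of the form at that stage.
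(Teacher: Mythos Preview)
Your proof is correct and follows essentially the same route as the paper's: the peeling argument establishing that $q_0$ is the unique degree-$(d-3)$ form vanishing on $\Gamma_2$, the explicit construction of $p$ as the product of the diagonals $x+y-jz$ for $j=3,\ldots,d$ with the conic from Constraint~\ref{Const:ellipse}, and the Cayley--Bacharach argument using $q_0(v_0)\neq 0$ to force every coefficient $u_{v_0}$ to be nonzero are all exactly what the paper does. Your presentation is slightly more explicit in writing out the Cayley--Bacharach identity $\dim\I(\Gamma_1)_d=2+\dim\I(\Gamma_2)_{d-3}$ and in tracking the degree count through the peeling, but there is no substantive difference in method.
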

See Figure \ref{fig:ExtRayC4} for the case $d=5$ and Figure \ref{fig:perturbation} for the case $d=9$.

\begin{proof}
First observe that there is a unique (up to scaling) form of degree $d-3$ vanishing on $\Gamma_2$, namely $(x+y-2z)\prod_{j=1}^{d-4}(x+y-(d+j)z)$, the product of diagonals defining $\Gamma_2$: Indeed, suppose $f$ is a form of degree $d-3$ vanishing on $\Gamma_2$, then it intersects the line $x+y=d+1$ in $d-2$ integer points. Therefore it vanishes identically on it and we can divide $f$ by this linear polynomial and get a form of degree $d-4$ vanishing on $d-3$ points on the line $x+y=d+2$. Inductively, we conclude that $f$ is (again up to scaling) the claimed product of linear forms. Therefore, by the Cayley-Bacharach Theorem, the space of forms of degree $d$ vanishing on $\Gamma_1$ is $3$-dimensional,
so it is spanned by $L_1, L_2$ and a third form $p$. We will explicitly construct this form: Let $p$ be the product of the linear forms $x+y-jz$ for $j=3,\ldots,d$ and of the ellipse $\V(x^2+y^2-\frac{2(d-2)}{(d-1)}xy-x-y)$ passing through the six points $(0,0),(1,0),(0,1),(d-2,d-1),(d-1,d-1)$ and $(d-1,d-2)$. By construction, $p$ vanishes on $\Gamma_1$, is of degree $d$ and does not vanish on all of $\Gamma$. Therefore $\{L_1,L_2,p\}$ is a basis of the space of forms of degree $d$ vanishing on $\Gamma_1$. By assumption on $d$, the form $p$ does not vanish on any point of $\Gamma$ other than the six mentioned above. 

Note that $|\Gamma_1| = \binom{d+2}{2}-2$, because $|\Gamma_1| = d^2 - |\Gamma_2| = d^2 - (3+\sum_{j=1}^{d-4}(d-1-j))=d^2-\binom{d-1}{2}$.
So the fact that the space of forms of degree $d$ vanishing on $\Gamma_1$ is $3$-dimensional implies that there is a unique linear relation among the point evalutaions on forms of degree $d$ at points of $\Gamma_1$. To see that all coefficients $u_v$ in the relation $\sum_{v\in\Gamma_1}u_v{\rm ev}_v=0$ are non-zero, note that the unique form $f$ of degree $d-3$ vanishing on $\Gamma_2$ does not vanish on any point of $\Gamma_1$. Therefore, there is no form of degree $d-3$ vanishing on $\Gamma_2\cup\{v_0\}$ for any $v_0\in\Gamma_1$ and Cayley-Bachrach implies that the point evaluations $\{ {\rm ev}_v\colon v\in \Gamma_1\}\setminus\{ {\rm ev}_{v_0}\}$ are linearly independent.
\end{proof}

\begin{Lem}\label{Lem:ExtremeRaysC4}
There is an extreme ray $\R_+\ell$ of $\Sigma_{2d}^\vee$ such that $B_\ell$ has corank $4$. The Hilbert function of the ideal $I(\ell)$ is $\hilb(I(\ell),j)=\binom{j+2}{2} = \hilb(I(\ell),2d-j)$ for $0\leq j<d$ and $\hilb(I(\ell),d)=\binom{d+2}{2}-4$.
\end{Lem}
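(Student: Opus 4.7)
The plan is to construct $\ell$ explicitly as a signed combination of point evaluations at $\Gamma_1$ from Proposition~\ref{Prop:CB}, using the unique Cayley--Bacharach relation $\sum_{v \in \Gamma_1} u_v \ev_v = 0$ (with all $u_v \neq 0$) to engineer a fourth independent form in $\ker B_\ell$ beyond $I(\Gamma_1)_d = \lspan(L_1, L_2, p)$. Fix $v_0 \in \Gamma_1$ and take weights $c_v = 1$ for $v \neq v_0$ and $c_{v_0} = -u_{v_0}^2/\sum_{v \neq v_0} u_v^2$, which ensures $\sum_v u_v^2/c_v = 0$. Set $\ell = \sum_{v \in \Gamma_1} c_v \ev_v$.

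To verify $B_\ell \succeq 0$ with corank exactly $4$: since the evaluation $\R[\ul{x}]_d \to \R^{\Gamma_1}$ has image $u^\perp$ and kernel $I(\Gamma_1)_d$, the form $B_\ell$ is the pullback of the quadratic form $Q(w) = \sum_v c_v w_v^2$ on $u^\perp$. Eliminating $w_{v_0}$ via the constraint $\sum u_v w_v = 0$ rewrites
\[
Q(w) = \sum_{v \neq v_0} w_v^2 - \frac{\bigl(\sum_{v \neq v_0} u_v w_v\bigr)^2}{\sum_{v \neq v_0} u_v^2},
\]
which is nonnegative by the Cauchy--Schwarz inequality, with equality exactly on the one-dimensional line $(w_v)_{v \neq v_0} \propto (u_v)_{v \neq v_0}$. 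Hence $B_\ell$ is PSD with corank $3 + 1 = 4$, and the extra kernel form $q_0 \in \R[\ul{x}]_d$ is characterized by $q_0(v) = u_v$ for $v \neq v_0$ and $q_0(v_0) = -\sum_{v \neq v_0} u_v^2/u_{v_0}$; its existence is guaranteed because the vector $(u_v/c_v)_v$ lies in $u^\perp$ by the balance equation. Thus $I(\ell)_d = \lspan(L_1, L_2, p, q_0)$.

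For the Hilbert function, the strategy is to verify that each Catalecticant $B_{\ell, j, 2d-j}$ with $j < d$ has full column rank $\binom{j+2}{2}$, i.e., $I(\ell)_j = 0$. Since all $c_v$ are nonzero and the evaluation $\R[\ul{x}]_{2d-j} \to \R^{\Gamma_1}$ is surjective for $2d - j \geq d + 1$ (a fact about the regularity of $\Gamma_1$ as a subscheme of the complete intersection $\Gamma$), the left kernel of the Catalecticant reduces to $I(\Gamma_1)_j$. That $I(\Gamma_1)_j = 0$ for $j < d$ then follows by a Cayley--Bacharach-style argument: multiplying a form in $I(\Gamma_1)_j$ by the degree-$(d-3)$ product of diagonal forms vanishing on $\Gamma_2$ sends it into $I(\Gamma) = (L_1, L_2)$, and the specific geometry of $\Gamma_2$ (diagonals $x+y = 2, d+1, \ldots, 2d-4$, which for $d \geq 4$ avoid the ``critical'' diagonal $x+y = d-1$) allows one to iteratively strip off linear factors until the form is forced to zero. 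The remaining values of $\hilb(I(\ell), \cdot)$ follow by Remark~\ref{Rem:SymHilbFunc}.

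The main technical obstacle is the extreme-ray property. By Proposition~\ref{Prop:ExtremeRays}(b), this reduces to showing that $\R[\ul{x}]_d \cdot I(\ell)_d$ equals the hyperplane $\ker \ell$ in $\R[\ul{x}]_{2d}$; the inclusion $\subseteq$ is automatic. For equality, the plan is to use that $I(\ell)$ lies in the irreducible variety $\gor(T_1)$ (by the Hilbert function just computed), where the Buchsbaum--Eisenbud Pfaffian structure (cf.\ the degree matrices in the proof of Lemma~\ref{Lem:Corank4dim}) produces syzygies that express products of the higher-degree generators with suitable lower-degree forms as elements of $\R[\ul{x}]_d \cdot I(\ell)_d$. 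Closing this argument --- either by a direct Pfaffian computation for our explicit $L_1, L_2, p, q_0$, or by a Zariski-density argument perturbing $\ell$ within $\gor(T_1) \cap \Sigma_{2d}^\vee$ of corank $4$ into the extreme-ray locus --- is the crux of the proof. The case $d = 4$ will require separate handling since the ``extra'' generator lies in degree $d+2 = 6$ rather than $d+1$.
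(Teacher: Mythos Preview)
Your construction of $\ell$ and the Cauchy--Schwarz verification that $B_\ell$ is positive semidefinite of corank exactly $4$ match the paper. The gap is the extremality step, which you explicitly leave open. Your Zariski-density suggestion is circular as stated: perturbing into the extreme-ray locus within $\gor(T_1)$ presupposes that locus is nonempty, which is precisely what this lemma establishes and what the paper then feeds into Theorem~\ref{Thm:ZarClExtRays}. The Pfaffian route could in principle show that the open condition $\dim\langle I_d\rangle_{2d}=\binom{2d+2}{2}-1$ is nonempty on $\gor(T_1)$, but carrying that out is harder than the paper's direct argument and you have not done it.

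The paper proves extremality by an elementary syzygy count exploiting the point configuration. First, any relation $a_1 L_1 + a_2 L_2 + bp = 0$ with $a_i,b\in\R[\ul{x}]_{d-3}$, evaluated on $\Gamma_2$ (where $L_1=L_2=0$ while $p\neq 0$ by Constraint~\ref{Const:ellipse}), forces $b$ to be the unique degree-$(d-3)$ form vanishing on $\Gamma_2$; coprimality of $L_1,L_2$ makes this the only such syzygy, so $\langle L_1,L_2,p\rangle_{2d-3}$ has codimension $|\Gamma_1|$, and hence so does $\langle L_1,L_2,p\rangle_{2d}$. Second, any relation $a_1 L_1 + a_2 L_2 + bp + cf = 0$ with coefficients in $\R[\ul{x}]_d$, evaluated on $\Gamma_1$ (where only $f$ survives and is nowhere zero), forces $c\in I(\Gamma_1)_d=\lspan(L_1,L_2,p)$. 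This yields exactly three independent syzygies beyond the previous count, so $\langle L_1,L_2,p,f\rangle_{2d}$ has codimension $|\Gamma_1|-\binom{d+2}{2}+3=1$. No Pfaffians, no density argument, and no separate treatment of $d=4$ are needed.
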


\begin{proof}
Let $L_1,L_2,p$ be as in Proposition \ref{Prop:CB} and consider the splitting $\V(L_1,L_2) = \Gamma =  \Gamma_1 \cup \Gamma_2$ of the points defined there. Pick a point $P\in\Gamma_1$ and set $\Lambda = \Gamma_1\setminus\{P\}$. We claim that the linear functional
\[
\ell = \sum_{v\in \Lambda} \ev_v - \frac{u_P^2}{\sum_{v\in\Lambda}u_v^2} \ev_P,
\]
where $u_v$ are the coefficients of the Cayley-Bacharach relation as in Proposition \ref{Prop:CB}, is an extreme ray of $\Sigma_{2d}^\vee$ and that the corresponding Hankel matrix $B_\ell$ has corank $4$.

First note that $B_\ell$ is positive semi-definite because
\begin{eqnarray*}
\ell(f^2) & = & \sum_{v\in\Lambda} f(v)^2 - \frac{u_P^2}{\sum_{v\in\Lambda} u_v^2} f(P)^2 \\
& = & \sum_{v\in\Lambda} f(v)^2 - \frac{u_P^2}{\sum_{v\in\Lambda} u_v^2} \frac{1}{u_P^2} \left(\sum_{v\in\Lambda} u_vf(v)\right)^2 \\
& = & \|(f(v)_{v\in\Lambda}\|^2 - \left| \left\langle \frac{1}{\|(u_v)_{v\in\Lambda}\|} (u_v)_{v\in\Lambda} , (f(v))_{v\in\Lambda} \right\rangle \right|^2 \\
& \geq & 0
\end{eqnarray*}
by the Cauchy-Schwarz inequality for all polynomials $f\in\R[\ul{x}]_d$. More precisely, $\ell(f^2)$ is zero for a form $f$ not identically zero on $\Gamma$ if and only if $f(v)=\alpha u_v$ for all $v\in\Lambda$ and some $\alpha\in\R^\ast$. Therefore, the degeneration space of the Hankel matrix is spanned by $L_1,L_2,p$ and the form uniquely determined (modulo $L_1,L_2,p$) by $f(v)=u_v$ for all $v\in\Lambda$; it has dimension $4$ as desired. Indeed, the form $f$ is uniquely determined because $\{\ev_x\in (\R[\ul{x}]_d/\lspan(L_1,L_2,p))^\ast\colon x\in\Lambda\}$ is a basis.

We now prove extremality of $\ell$ in $\Sigma_{2d}^\vee$ by checking the characterisation that $I(\ell)_d$ generates a hyperplane in the vector space of forms of degree $2d$, cf.~Blekherman \cite[Proposition 4.2]{BlePositiveGorensteinIdeals}. As a first step, we show that $\langle L_1,L_2,p \rangle_{2d-3}$ has codimension $|\V(L_1,L_2,p)|=|\Gamma_1|$. So suppose $a_1 L_1+a_2 L_2 + bp=0$, where $a_1,a_2,b\in\R[\ul{x}]_{d-3}$ are forms of degree $d-3$. By evaluating at points of $\Gamma_2$, we conclude that $b$ is the uniquely determined form of degree $d-3$ vanishing on $\Gamma_2$, cf.~proof of Proposition \ref{Prop:CB}. Since $L_1$ and $L_2$ are coprime, this is a unique syzygy and we conclude
\[
\dim(\langle L_1,L_2,p\rangle_{2d-3}) = 3\binom{d-1}{2} -1 = \frac32(d^2-3d+2)-1,
\]
which means codimension $|\Gamma_1|=d^2-\binom{d-1}{2}$ in $\R[\ul{x}]_{2d-3}$.
In particular, the codimension of $\langle L_1,L_2,p\rangle_{2d}$ in $\R[\ul{x}]_{2d}$ is also $|\Gamma_1|$ because the point evaluations $\{\ev_v\colon v\in\Gamma_1\}$ are linearly independent on forms of degree $2d-3$ and consequently also on forms of degree $2d$.

Now suppose $a_1L_1 + a_2L_2+bp + cf=0$ for forms $a_1,a_2,b,c\in\R[\ul{x}]_d$ of degree $d$. Evaluation at points of $\Gamma_1$ implies that $c$ lies in the span of $L_1,L_2,p$. So we have three syzygies and the codimension of $\langle L_1,L_2,p,f\rangle_{2d}$ is $|\Gamma_1| - \binom{d+2}{2}+3=1$, as desired.
\end{proof}

\begin{Exm}\label{Exm:Hankel}
We follow the construction in Proposition \ref{Prop:CB} and Lemma \ref{Lem:ExtremeRaysC4} in the case $d=5$. Then $\Gamma=\V(L_1,L_2)$ consists of the $25$ points $(i:j:1)\in\P^2$ where $i,j=0,\ldots,4$, see Figure \ref{fig:ExtRayC4}. The six points on the two lines $x+y=2$ and $x+y=6$ are the points of $\Gamma_2$. Indeed, the point evaluations at the $19$ points of $\Gamma_1 = \Gamma\setminus \Gamma_2$ on forms of degree $5$ satisfy a unique linear relation, namely

$\begin{pmatrix}
-1 &   3 &   0 &  -5 &  3 \\
 3 & -16 &  18 &   0 & -5 \\
 0 &  18 & -36 &  18 &  0 \\
-5 &   0 &  18 & -16 &  3 \\
 3 &  -5 &   0 &   3 & -1 \\
\end{pmatrix}$,

where the $(i,j)$-th entry of this matrix is the coefficient of the point evaluation at $(5-i:j-1:1)$ in the linear relation, i.e.~visually, it is the coefficient corresponding to the points in the $5\times 5$-grid seen in Figure \ref{fig:ExtRayC4}.
\begin{figure}[h]
\begin{center}
\begin{tikzpicture}
\filldraw (0,0) circle(2pt);
\filldraw (0,1) circle(2pt);
\filldraw (0,2) circle(2pt);
\filldraw (0,3) circle(2pt);
\filldraw (0,4) circle(2pt);
\filldraw (1,0) circle(2pt);
\filldraw (1,1) circle(2pt);
\filldraw (1,2) circle(2pt);
\filldraw (1,3) circle(2pt);
\filldraw (1,4) circle(2pt);
\filldraw (2,0) circle(2pt);
\filldraw (2,1) circle(2pt);
\filldraw (2,2) circle(2pt);
\filldraw (2,3) circle(2pt);
\filldraw (2,4) circle(2pt);
\filldraw (3,0) circle(2pt);
\filldraw (3,1) circle(2pt);
\filldraw (3,2) circle(2pt);
\filldraw (3,3) circle(2pt);
\filldraw (3,4) circle(2pt);
\filldraw (4,0) circle(2pt);
\filldraw (4,1) circle(2pt);
\filldraw (4,2) circle(2pt);
\filldraw (4,3) circle(2pt);
\filldraw (4,4) circle(2pt);
\draw (-1,3) -- (3,-1);
\draw (1,5) -- (5,1);
\end{tikzpicture}
\end{center}
\caption{The construction of an extreme ray of $\Sigma_{2d}^\vee$ of corank $4$ for $d=5$.}
\label{fig:ExtRayC4}
\end{figure}
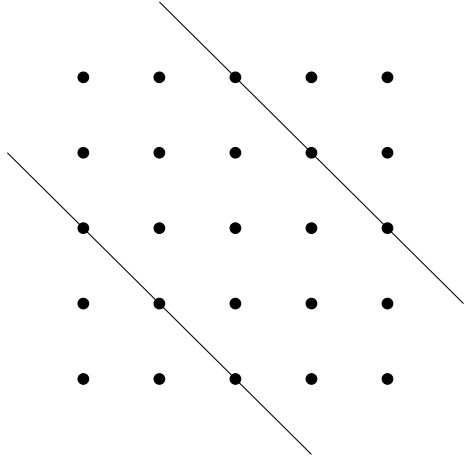
The $21\times 21$ Hankel matrix can be exactly computed using a computer algebra system. In Mathematica, the following code will do the job:

\begin{verbatim}
d = 5;
m = MonomialList[(x+y+z)^d];
q1 = x (x - z) (x - 2 z) (x - 3 z) (x - 4 z);
q2 = y (y - z) (y - 2 z) (y - 3 z) (y - 4 z);
Pevalall = Solve[{q1 ==  0, q2 ==  0, z ==  1}, {x, y, z}];
Pevalfoo = Select[Pevalall, ({y + x - 2 z} /. #) != {0} &];
Peval = Select[Pevalfoo, ({y + x - 6 z} /. #) != {0} &];
Peval0 = Drop[Peval, -1];

evals = m/.Peval;
CBrel = NullSpace[Transpose[evals]];
CB = Transpose[{Drop[CBrel[[1]],-1]}];

H = Transpose[{m}].{m};
P = Peval0;
Q = Sum[H /. P[[i]], {i, 1, Length[P]}];
Qp = H /. Peval[[-1]];
l = Norm[CB]^2;
Hankel = Q - 1/l (rel[[1]][[-1]])^2 Qp;
\end{verbatim}
We set up the monomial basis \texttt{m} and the totally real complete intersection of $25$ points, where $\texttt{q1}=L_1$ and $\texttt{q2} = L_2$. The two lines using the \texttt{Select}-command remove the points on the two diagonals $x+y=2$ and $x+y=6$, so $\Gamma_1 = \texttt{Peval}$. With the \texttt{Drop}-command, we remove one of the points from the list. 
The next three lines compute the unique Cayley-Bacharach relation \texttt{CBrel} on the point evaluations at \texttt{Peval}. We need \texttt{CB} when we solve the linear relation for the point evaluation at \texttt{Peval[[-1]]}.
The matrix \texttt{H} is the general Hankel matrix and $Q$ is the Hankel matrix of the linear functional $\sum_{v\in\Gamma_1\setminus \texttt{Peval[[-1]]}} \ev_v$ and \texttt{Qp} the Hankel matrix of the point evaluation at \texttt{Peval[[-1]]}. So \texttt{Hankel} is the Hankel matrix of the extreme ray that we constructed.
\end{Exm}

\begin{Rem}
Note that the proof of the Lemma \ref{Lem:ExtremeRaysC4} shows that the face of the cone $\Sigma_{2d}$ of sums of squares exposed by the constructed extreme ray consists of the sums of squares of polynomials in $I(\ell)_d$.
\end{Rem}

The fact that the conic vanishes in additional integer points on the $d\times d$ grid defined by the products of linear forms $L_1$ and $L_2$ in Proposition \ref{Prop:CB} destroys the extremality of the constructed linear functional because we get additional syzygies among the generators of the corresponding Gorenstein ideal. In order to deal with this problem, we will make a perturbation to our point arrangement. First, we want to observe the following fact, which motivates why we should be able to get around this obsatcle by perturbation:
\begin{Rem}\label{Rem:CBApplication}
Consider the setup in Proposition \ref{Prop:CB} and suppose the conic $C$ vanishes in additional integer points in the $d\times d$ integer grid $\Gamma=\V(L_1)\cap \V(L_2)$. Pick such a point $P\in\Gamma$. Then every form of degree $d$ vanishing on $\Gamma_1$ will also vanish at $P$ because $L_1$, $L_2$ and the third form $p$, which is the product of lines and the conic, form a basis of this space. By the Theorem of Cayley-Bacharach applied to $\Gamma = \Gamma_1'\cup\Gamma_2'$ for $\Gamma_1' = \Gamma_1 \cup\{P\}$ and $\Gamma_2' = \Gamma_2\setminus\{P\}$, there is a unique linear relation among the point evaluations at points of $\Gamma_2'$ on forms of degree $d-3$. In particular, the coefficient of the point evaluation at $P$ in the unique linear relation among point evaluations at $\Gamma_2$ on forms of degree $d-3$ is zero. The converse is also true by Cayley-Bacharach, so we have:\\
The conic $C$ vanishes in a point in $P\in\Gamma_2$ if and only if the coefficient of the point evaluation at $P$ in the unique linear relation among $\{\ev_v\in \R[\ul{x}]_{d-3}^\ast \colon v\in\Gamma_2\}$ is zero. This seems to be a non-generic property and we will indeed show that we can make all coefficients in the linear relation among these point evaluations non-zero by a careful perturbation of $L_1$ and $L_2$.
\end{Rem}

We now drop the assumptions on $d$ made in \ref{Const:ellipse} and prove Lemma \ref{Lem:ExtremeRaysC4} for all $d\geq 4$:
\begin{Lem}\label{Lem:ExRayC4Per}
For any $d\geq 4$, there is an extreme ray $\R_+\ell$ of $\Sigma_{2d}^\vee$ such that $B_\ell$ has corank $4$. The Hilbert function of the ideal $I(\ell)$ is $\hilb(I(\ell),j) = \binom{j+2}{2}$ for $0\leq j<d$ and $\hilb(I(\ell),d) = \binom{d+2}{2}-4$.
\end{Lem}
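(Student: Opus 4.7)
I would place the construction of Proposition \ref{Prop:CB} and Lemma \ref{Lem:ExtremeRaysC4} into a family, use a dimension count to reduce the problem to showing certain ``bad'' conditions are non-generic, and then check non-genericity by an explicit perturbation. Let $U \subset \R^{2d}$ be the open set of tuples $\tau = (s_0, \dots, s_{d-1}, t_0, \dots, t_{d-1})$ with all $s_i$ distinct and all $t_j$ distinct. For $\tau \in U$ put $L_1^\tau = \prod_i(x - s_i z)$, $L_2^\tau = \prod_j(y - t_j z)$, $\Gamma^\tau = \V(L_1^\tau, L_2^\tau)$, and let $\Gamma_1^\tau, \Gamma_2^\tau$ be the subsets indexed by the same subsets of $\{0,\dots,d-1\}^2$ as in Proposition \ref{Prop:CB}. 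The tuple $\tau_0 = (0,1,\dots,d-1,0,1,\dots,d-1)$ lies in $U$ and realises the original configuration.

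Let $V \subset U$ be the locus where the $\binom{d-1}{2}$ points of $\Gamma_2^\tau$ fail to impose independent conditions on $\R[\ul{x}]_{d-3}$; equivalently, $V$ is the vanishing locus of the $\binom{d-1}{2} \times \binom{d-1}{2}$ evaluation determinant $D(\tau)$. The tuple $\tau_0$ lies in $V$ because the product of the diagonals defining $\Gamma_2$ gives an element of $\I(\Gamma_2)_{d-3}$, and $D$ is not identically zero on $U$ because $\binom{d-1}{2}$ points in very general position impose independent conditions on degree-$(d-3)$ forms; so $V$ is a real hypersurface of $U$ of dimension $2d-1 \geq 7$. At any $\tau \in V$ where $\dim \I(\Gamma_2^\tau)_{d-3} = 1$ (the Zariski-open stratum of $V$), the Cayley-Bacharach Theorem yields $\dim \I(\Gamma_1^\tau)_d = 3$, so any form $p^\tau \in \I(\Gamma_1^\tau)_d \setminus (L_1^\tau, L_2^\tau)$ plays the role of $p$ from Proposition \ref{Prop:CB}, regardless of whether it factors as lines times a conic. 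The same Cayley-Bacharach argument as in Remark \ref{Rem:CBApplication} then gives: $p^\tau$ vanishes at $v \in \Gamma_2^\tau$ if and only if the coefficient $c_v(\tau)$ of $\ev_v$ in the unique (up to scaling) linear relation among $\{\ev_w : w \in \Gamma_2^\tau\}$ on $\R[\ul{x}]_{d-3}^\ast$ vanishes.

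The crux is to show that for each labelling point $v$, the regular function $c_v$ is non-zero at some $\tau \in V$; granted this, the bad locus $\bigcup_v \{c_v = 0\} \cap V$ is a proper Zariski closed subset of $V$, and any real $\tau \in V$ close to $\tau_0$ in its complement with all $s_i, t_j$ distinct produces the desired configuration. This non-vanishing step is where I expect the main obstacle: for each index $(i,j)$ labelling a point of $\Gamma_2$, I would exhibit an explicit perturbation inside $V$ by leaving all diagonals of $\Gamma_2$ intact as exact lines except for the one containing $(i,j)$, perturbing only that diagonal so that the resulting $\Gamma_2^\tau$ still lies on a reducible degree-$(d-3)$ curve, and checking by a direct calculation that the conic component of $p^\tau$ no longer passes through the perturbed grid point indexed by $(i,j)$. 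Since $|\Gamma_2| = \binom{d-1}{2}$ is finite, this is a finite case analysis.

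At a good $\tau \in V$, the proof of Lemma \ref{Lem:ExtremeRaysC4} applies verbatim with $L_1^\tau, L_2^\tau, p^\tau$ in place of $L_1, L_2, p$: positive semi-definiteness of $B_\ell$ follows from Cauchy-Schwarz applied to the Cayley-Bacharach relation on $\Gamma_1^\tau$; the degeneration space has dimension $4$, spanned by $L_1^\tau, L_2^\tau, p^\tau$ and the interpolation form determined (modulo these) by the values $u_v$; extremality follows from the same syzygy count, which relies only on uniqueness up to scale of the degree-$(d-3)$ form vanishing on $\Gamma_2^\tau$ and on non-vanishing of $p^\tau$ on $\Gamma_2^\tau$; and the Hilbert function is as claimed by Remark \ref{Rem:SymHilbFunc} together with the generic dimension count showing that no form of degree $< d$ vanishes on $\Gamma_1^\tau \setminus \{P\}$.
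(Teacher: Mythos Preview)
Your overall framework---place the configuration in a family, identify the hypersurface $V$ on which $\Gamma_2^\tau$ admits a degree-$(d-3)$ relation, and show the locus where some Cayley--Bacharach coefficient vanishes is proper in $V$---is exactly the paper's strategy. But the crucial step, the non-vanishing of each $c_v$ on $V$ near $\tau_0$, is only a sketch in your write-up, and the sketch has two soft spots. First, your justification that $D$ is not identically zero on $U$ (``$\binom{d-1}{2}$ points in very general position impose independent conditions'') is not quite sufficient: the points of $\Gamma_2^\tau$ are constrained to lie on a $d\times d$ grid, so they are never in general position in the ambient plane, and you have to argue separately that the grid constraint does not force them to lie on a degree-$(d-3)$ curve for every $\tau$. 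Second, and more seriously, your proposed verification of $c_v\not\equiv 0$ is a per-point case analysis---``perturb only the diagonal containing $(i,j)$, check the conic component of $p^\tau$ misses the perturbed point''---but you do not carry it out, the number of cases $\binom{d-1}{2}$ grows with $d$ so you need a uniform argument, and after perturbation there is no reason $p^\tau$ should still factor as a conic times lines.

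The paper handles this step with a single one-parameter perturbation and a uniform argument. It moves only the point $(1{:}1{:}1)$ along the line $x+y=2$ to $v_t=(t,2-t)$, replacing $L_1$ by $x(x-tz)\prod_{j\ge 2}(x-jz)$ and $L_2$ by $y(y-(2-t)z)\prod_{j\ge 2}(y-jz)$. The remaining points of $\Gamma_2$ are unchanged (none of them has an index equal to $1$), and the perturbed $\Gamma_2$ still lies exactly on the same product of diagonals, so we stay in your $V$ automatically. The key claim is then that no coefficient $\alpha_P(t)$ of the relation $\ev_{v_t}=\sum_{P\in\Gamma_2'}\alpha_P(t)\ev_P$ is identically zero as a function of $t$. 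This is proved \emph{uniformly in $P$} by the following contradiction: if $\alpha_P\equiv 0$, then there is a degree-$(d-3)$ form $f_P$ with $f_P(P)=1$ vanishing on $\Gamma_2'\setminus\{P\}$ and identically on the line $x+y=2$; factoring out this line and then, inductively, each diagonal to the left of $P$, one runs out of degree before reaching the diagonal through $P$, contradicting $f_P(P)=1$. This single line-factoring argument replaces your entire case analysis and does not rely on any factorisation of $p^\tau$. After that, Remark~\ref{Rem:CBApplication} converts non-vanishing of all $\alpha_P(t_0)$ into non-vanishing of $p^{\tau(t_0)}$ on all of $\Gamma_2$, and the rest of your last paragraph (Cauchy--Schwarz for positive semi-definiteness, four-dimensional kernel, syzygy count for extremality) goes through as you say.
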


\begin{proof}
We start as above with the products of linear forms $L_1 =\prod_{j=0}^{d-1}(x-jz)$ and $L_2 =\prod_{j=0}^{d-1}(y-jz) $ and denote by $\Gamma$ the complete intersection $\V(L_1)\cap \V(L_2)$. Split $\Gamma$ into
\[
\Gamma_2 = \{(x:y:1)\colon x+y=2\}\cup\bigcup_{j=1}^{d-4}\{(x:y:1)\colon x+y=d+j\}
\]
and $\Gamma_1=\Gamma\setminus\Gamma_2$. Then the space of forms of degree $d$ vanishing on $\Gamma_1$ has dimension $3$. Let $p$ be the uniquely determined form of degree $d$ such that $L_1,L_2,p$ is a basis of this space. By Cayley-Bacharach, we know that there is a unique relation among the point evaluations $\{\ev_x\in \R[\ul{x}]_{d-3}^\ast \colon x\in\Gamma_2\}$, say
\[
\sum_{x\in\Gamma_2} w_x\ev_x=0.
\]
Note that by the preceding Remark \ref{Rem:CBApplication}, the coefficient of $\ev_{(1:1:1)}$ is non-zero.
Set $\Gamma_1'=\Gamma_1\cup \{(1:1:1)\}$ and $\Gamma_2'=\Gamma_2'\setminus\{(1:1:1)\}$. Then the point evaluations $\{\ev_x\in\R[\ul{x}]_{d-3}^\ast\colon x\in\Gamma_2'\}$ are linearly independent and span a hyperplane $H$ in $\R[\ul{x}]_{d-3}^\ast$. So there is a unique form $q$ of degree $d-3$ vanishing on $\Gamma_2'$, namely the one vanishing on all of $\Gamma_2$, i.e.~$q=(x+y-2z)\prod_{j=1}^{d-4}(x+y-(d+j)z)$.

We will now perturb the point $(1:1:1)$ along the line $x+y=2$, see Figure \ref{fig:perturbation} for a visualisation in case $d=9$: Let $v_t := (t,2-t)$. Of course, $q(v_t)=0$ for every $t\in\R$, i.e.~the point evaluation $\ev_{v_t}\in\R[\ul{x}]_{d-3}^\ast$ lies in the hyperplane spanned by the point evaluations at $\Gamma_2'$; write
\[
\ev_{v_t} = \sum_{x\in\Gamma_2'} \alpha_x(t)\ev_x,
\]
where the coefficients $\alpha_x(t)$ are rational functions of the parameter $t$.

Suppose there is a point $P\in\Gamma_2'$ such that $\alpha_P(t)=0$ for all $t\in\R$. Then $\ev_{v_t}\in\lspan(\ev_v\colon v\in\Gamma_2'\setminus\{P\})$. Dually this means, that there is a form $f_P$ of degree $d-3$, uniquely determined modulo $q$, such that
$f_P(P)=1$,
$f_P(v)=0$ for all $v\in\Gamma_2'\setminus\{P\}$ and consequently
$f_P(v_t) = 0$ for all $t\in\R$.
Such a form cannot exist: Since $v_t$ ranges over the whole line defined by $x+y=2$, the form $f_P$ vanishes identically on this line; so we can factor it out. Furthermore, $f_P$ vanishes identically on every diagonal defining $\Gamma_2$ to the left of $P$, i.e.~$f_P(x,j-x)=0$ for all $d<j<P_1+P_2$ because it has too many zeros on these lines from $\Gamma_2'$.

Now $\Gamma_2'\cap\{x+y=P_1+P_2\}$ consists of $2d-1-P_1-P_2$ many points. We have already established $P_1+P_2-d$ linear factors of $f_P$, so the remaining cofactor has degree $2d-P_1-P_2-3$. Therefore, $f_P$ vanishes identically on this line, which is a contradiction because it contains $P$.

\begin{figure}[h]
\begin{center}
\begin{tikzpicture}
\filldraw (0,0) circle(2pt); \filldraw (0,2) circle(2pt); \filldraw (0,3) circle(2pt); \filldraw (0,4) circle(2pt); \filldraw (0,5) circle(2pt); \filldraw (0,6) circle(2pt); \filldraw (0,7) circle(2pt); \filldraw (0,8) circle(2pt); \filldraw (2,0) circle(2pt); \filldraw (2,2) circle(2pt); \filldraw (2,3) circle(2pt); \filldraw[red] (2,4) circle(2pt); \filldraw (2,5) circle(2pt); \filldraw (2,6) circle(2pt); \filldraw (2,7) circle(2pt); \filldraw (2,8) circle(2pt); \filldraw (3,0) circle(2pt); \filldraw (3,2) circle(2pt); \filldraw (3,3) circle(2pt); \filldraw (3,4) circle(2pt); \filldraw (3,5) circle(2pt); \filldraw (3,6) circle(2pt); \filldraw (3,7) circle(2pt); \filldraw (3,8) circle(2pt); \filldraw (4,0) circle(2pt); \filldraw[red] (4,2) circle(2pt); \filldraw (4,3) circle(2pt); \filldraw (4,4) circle(2pt); \filldraw (4,5) circle(2pt); \filldraw[red] (4,6) circle(2pt); \filldraw (4,7) circle(2pt); \filldraw (4,8) circle(2pt); \filldraw (5,0) circle(2pt); \filldraw (5,2) circle(2pt); \filldraw (5,3) circle(2pt); \filldraw (5,4) circle(2pt); \filldraw (5,5) circle(2pt); \filldraw (5,6) circle(2pt); \filldraw (5,7) circle(2pt); \filldraw (5,8) circle(2pt); \filldraw (6,0) circle(2pt); \filldraw (6,2) circle(2pt); \filldraw (6,3) circle(2pt); \filldraw[red] (6,4) circle(2pt); \filldraw (6,5) circle(2pt); \filldraw (6,6) circle(2pt); \filldraw (6,7) circle(2pt); \filldraw (6,8) circle(2pt); \filldraw (7,0) circle(2pt); \filldraw (7,2) circle(2pt); \filldraw (7,3) circle(2pt); \filldraw (7,4) circle(2pt); \filldraw (7,5) circle(2pt); \filldraw (7,6) circle(2pt); \filldraw (7,7) circle(2pt); \filldraw (7,8) circle(2pt); \filldraw (8,0) circle(2pt); \filldraw (8,2) circle(2pt); \filldraw (8,3) circle(2pt); \filldraw (8,4) circle(2pt); \filldraw (8,5) circle(2pt); \filldraw (8,6) circle(2pt); \filldraw (8,7) circle(2pt); \filldraw (8,8) circle(2pt);
\filldraw (3/4,0) circle(2pt); \filldraw (3/4,5/4) circle(2pt); \filldraw (3/4,2) circle(2pt); \filldraw (3/4,3) circle(2pt); \filldraw (3/4,4) circle(2pt); \filldraw (3/4,5) circle(2pt); \filldraw (3/4,6) circle(2pt); \filldraw (3/4,7) circle(2pt); \filldraw (3/4,8) circle(2pt);
\filldraw (0,5/4) circle(2pt); \filldraw (2,5/4) circle(2pt); \filldraw (3,5/4) circle(2pt); \filldraw (4,5/4) circle(2pt); \filldraw (5,5/4) circle(2pt); \filldraw (6,5/4) circle(2pt); \filldraw (7,5/4) circle(2pt); \filldraw (8,5/4) circle(2pt);
\filldraw[black!30!white] (0,1) circle(2pt); \filldraw[black!30!white] (1,1) circle(2pt); \filldraw[black!30!white] (2,1) circle(2pt); \filldraw[black!30!white] (3,1) circle(2pt); \filldraw[black!30!white] (4,1) circle(2pt); \filldraw[black!30!white] (5,1) circle(2pt); \filldraw[black!30!white] (6,1) circle(2pt); \filldraw[black!30!white] (7,1) circle(2pt); \filldraw[black!30!white] (8,1) circle(2pt);
\filldraw[black!30!white] (1,0) circle(2pt); \filldraw[black!30!white] (1,2) circle(2pt); \filldraw[black!30!white] (1,3) circle(2pt); \filldraw[black!30!white] (1,4) circle(2pt); \filldraw[black!30!white] (1,5) circle(2pt); \filldraw[black!30!white] (1,6) circle(2pt); \filldraw[black!30!white] (1,7) circle(2pt); \filldraw[black!30!white] (1,8) circle(2pt);
\draw (-1,3) -- (3,-1);
\draw (1,9) -- (9,1);
\draw (2,9) -- (9,2);
\draw (3,9) -- (9,3);
\draw (4,9) -- (9,4);
\draw (5,9) -- (9,5);
\draw[black!30!white] (1,-0.2) -- (1,8.2);
\draw[black!30!white] (-0.2,1) -- (8.2,1);
\draw[rotate=45,black!30!white] (5.65685,0) ellipse (5.65685cm and 1.46059cm);
\end{tikzpicture}
\end{center}
\caption{A picture of the perturbation for general $d\geq 4$ shown for the first critical case $d=9$: The black points and the four red points are the perturbed point configuration for which our construction works. The four red points are the additional points through which the grey ellipse goes.}
\label{fig:perturbation}
\end{figure}
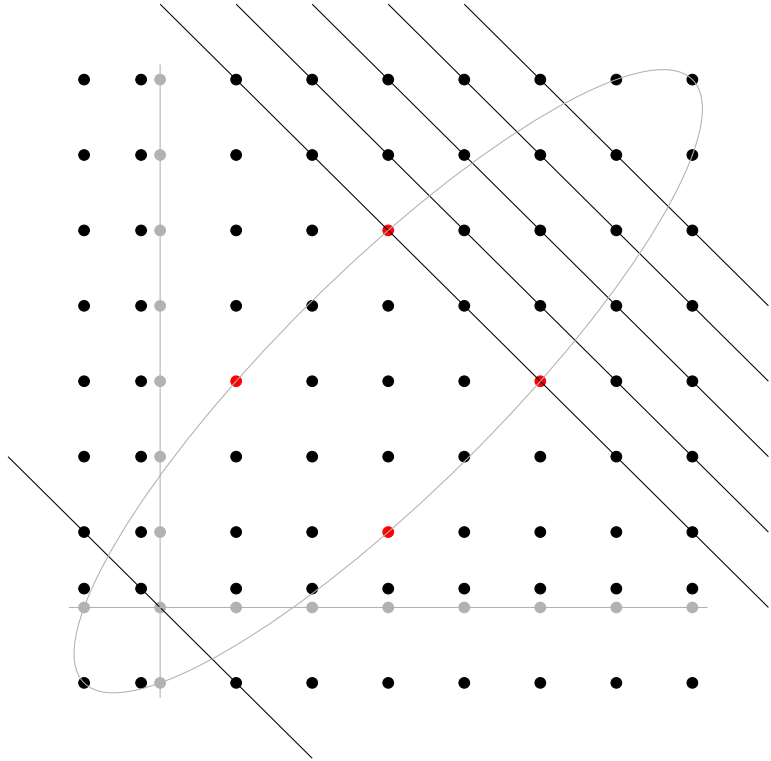

So there is an $\epsilon>0$ such that for all $t\in (1-\epsilon,1)$, all coefficients of the linear relation
\[
\ev_{v_t} = \sum_{v\in\Gamma_2'}\alpha_v(t)\ev_v
\]
are non-zero. Pick a $t_0$ in this interval and consider the totally real complete intersection $\Gamma=\V(L_1')\cap\V(L_2')$ for $L_1 = x(x-t_0z)\prod_{j=2}^{d-1}(x-jz)$ and $L_2 = y(y-(2-t_0)z)\prod_{j=2}^{d-1}(x-jz)$ and argue as above: we split the points into $\Gamma_1$ and $\Gamma_2$, where $\Gamma_2$ is the same union of diagonals as above. The Theorem of Cayley-Bacharach then implies the existence of a form of degree $d$ vanishing on $\Gamma_1$ and not identically on $\Gamma$. In fact, by Remark \ref{Rem:CBApplication}, this form does not vanish in any point of $\Gamma_2$, so we can now complete the proof as in Lemma \ref{Lem:ExtremeRaysC4}.
\end{proof}

\begin{Rem}
In particular, the union of all extreme rays of $\Sigma_{2d}^\vee$ need not be closed, e.g.~for $d=9$, extremality fails in our original construction but a perturbation gives an extreme ray. 
\end{Rem}

\begin{Thm}\label{Thm:ZarClExtRays}
For any $d\geq 4$, the Zariski closure of the set of extreme rays of $\Sigma_{2d}^\vee$ is the variety of Hankel matrices of corank at least $4$. It is irreducible, has codimension $10$, and degree $\prod_{\alpha = 0}^3 \binom{N+\alpha}{4-\alpha}/\binom{2\alpha +1}{\alpha}$, where $N = \binom{d+2}{2}$.
\end{Thm}

\begin{proof}
We have shown in the proof of Lemma \ref{Lem:Corank4dim} that the quasi-projective variety $\gor(T)$ of all Gorenstein ideals with Hilbert function $T(j)=\binom{j+2}{2}$ for $0\leq j<d$ and $T(d)=\binom{d+2}{2}-4$ is dense in $X_{-4}$. It is also smooth, cf.~Theorem \ref{Thm:GorTTangentSpace} or Iarrobino-Kanev \cite[Theorem 4.21]{IarKanMR1735271}. We have shown in Lemma \ref{Lem:ExtremeRaysC4} that there is an extreme ray $\R_+\ell_0$ of $\Sigma_{2d}^\vee$ with $I(\ell_0)\in\gor(T)$. We will now show that every linear functional in an open neighbourhood of $\ell_0$ in $\gor(T)$ spans an extreme ray of $\Sigma_{2d}^\vee$. Since $I(\ell)\in\gor(T)$ implies that the corank of the Hankel matrix $B_\ell$ is $4$, there is an open neighbourhood of $\ell_0$ such that $B_\ell$ is positive semi-definite for all $\ell$ in this neighbourhood, because the eigenvalues of a symmetric matrix depend continuously on its entries.
Therefore, a linear functional $\ell$ in this neighbourhood spans an extreme ray of $\Sigma_{2d}^\vee$ if and only if $I(\ell)_d$ generates a hyperplane in $\R[\ul{x}]_{2d}$, i.e.~$\langle I(\ell)_d\rangle_{2d}=I(\ell)_{2d}$.
By Gauss' algorithm (column echelon form), we can write a basis $(b_1,b_2,b_3,b_4)$ of the kernel of $B_\ell$ in terms of rational functions in the entries of $B_\ell$. We consider the linear map
\[
\R[\ul{x}]_d^4\to \R[\ul{x}]_{2d}, (f_1,f_2,f_3,f_4)\mapsto f_1b_1+f_2b_2+f_3b_3+f_4b_4.
\]
The rank of this map is at least $\binom{2d+2}{2}-1$ (i.e.~the image is a hyperplane) because $\ell\in\gor(T)$. The image is a hyperplane for $\ell=\ell_0$. So the same is true for every $\ell$ in a neighbourhood of $\ell_0$ in $\gor(T)$, which shows that these $\ell$ are extreme rays of $\Sigma_{2d}^\vee$.
\end{proof}

\begin{Rem}
In the proof of the above Theorem, we see that if $T$ is a Hilbert function occuring for a Gorenstein ideal corresponding to an extreme ray of $\Sigma_{2d}^\vee$, then there is an open subset of extreme rays in a connected component of $\gor(T)(\R)$ because $\gor(T)$ is smooth. As we remarked above, it might not be the entire connected component.
\end{Rem}

In fact, this gives an interesting connection to irreducible components of the algebraic boundary of the cone $\Sigma_{2d}$ of sums of squares, the Zariski closure of its boundary in the Euclidean topology.
\begin{Thm}\label{Thm:algboundandgors}
Let $X\subset\partial_a \Sigma_{2d}$ be an irreducible component. Then its dual projective variety $X^\ast$ is a subvariety of the Zariski closure of the union of extreme rays of $\Sigma_{2d}^\vee$, i.e.~the variety of Hankel matrices of corank $\geq 4$. Moreover, there is a Hilbert function $T$ such that $\gor(T)$ is Zariski dense in $X^\ast$. 
\end{Thm}

\begin{proof}
We rely on the results of \cite{SinnAlgBound} for the proof: By \cite[Proposition 3.1]{SinnAlgBound}, the projective dual variety of $X$ is contained in $\exr_a(\Sigma_{2d}^\vee)$, the Zariski closure of the union of extreme rays of $\Sigma_{2d}^\vee$ and $X^\ast\cap \exr(\Sigma_{2d}^\vee)$ is Zariski dense in $X^\ast$. So let $\ell$ be an extreme ray of $\Sigma_{2d}^\vee$ and a general point of $X^\ast$. Let $T_\ell$ be the Hilbert function of the corresponding Gorenstein ideal $I(\ell)$. Since $\gor(T_\ell)$ is smooth and every point in a neighbourhood of $\ell$ in $\gor(T_\ell)$ is also an extreme ray of $\Sigma_{2d}^\vee$, the quasiprojective variety $\gor(T_\ell)$ is Zariski dense in $X^\ast$. Indeed, the variety $\gor(T)$ is irreducible for any permissible Hilbert function $T$ and the irreducible variety $X^\ast$ is the union of some of these varieties. So one of them must be Zariski dense in $X^\ast$ and for a general $\ell\in X^\ast$, the Hilbert function of the corresponding Gorenstein ideal $I(\ell)$ identifies this variety $\gor(T)$.
\end{proof}

Our construction of an extreme ray of maximal rank also gives a base-point free special linear system with a totally real representative on a smooth curve of degree $d\geq 4$, which might be interesting in itself.
\begin{Prop}
Let $d\geq 4$. There is a smooth real curve $X\subset\P^2$ of degree $d$ and an effective divisor $D$ of degree $g = \binom{d-1}{2}$ supported on $X(\R)$ such that $|D|$ has dimension $1$ and is base-point free.
\end{Prop}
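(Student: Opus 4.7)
The plan is to produce $X$ as a generic smooth real member of the three-dimensional vector space $W = I(\Gamma_1)_d = \langle L_1, L_2, p\rangle$ from Proposition \ref{Prop:CB} (or from the perturbed construction of Lemma \ref{Lem:ExRayC4Per}), and to take $D$ to be the residual divisor $D = L_1 \cdot X - \Gamma_1$ on $X$. The key structural fact is that the linear system $\PP(W)$ cuts out a pencil on $X$ whose fixed part is exactly $\Gamma_1$.

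I would first verify that a non-empty Zariski-open subset of $W_\R$ consists of smooth real plane curves. The base locus of $W$ is $\V(L_1)\cap\V(L_2)\cap\V(p) = \Gamma_1$, a reduced finite set of real points; at each $P\in\Gamma_1$ the gradients $\nabla L_1(P)$ and $\nabla L_2(P)$ are linearly independent because $L_1$ and $L_2$ intersect transversally in $\Gamma$, so the generic element of $W$ has non-vanishing gradient at $P$ and hence is smooth there. Away from $\Gamma_1$, Bertini's theorem provides smoothness of the generic member.

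For such a smooth $X$, B\'ezout gives $\deg(L_1\cdot X) = d^2$ and $\deg\Gamma_1 = \binom{d+2}{2}-2$, so $\deg D = \binom{d-1}{2} = g$. The short exact sequence $0\to\mathcal{O}_{\PP^2}\to\mathcal{O}_{\PP^2}(d)\to\mathcal{O}_X(d)\to 0$ induces an isomorphism $H^0(\PP^2,\mathcal{O}(d))/\langle X\rangle \cong H^0(X,\mathcal{O}_X(d))$, so effective divisors in $|\mathcal{O}_X(d)|$ containing $\Gamma_1$ correspond bijectively to $W/\langle X\rangle$, a $\PP^1$. Via $E\mapsto E+\Gamma_1$ this pencil is identified with $|D|$, giving $\dim|D|=1$. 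Base-point freeness follows because any base point of $|D|$ on $X$ would have to lie in the base locus $\Gamma_1$ of $W$, and for a generic $X$ each line of $\V(L_1)$ meets $X$ transversally at those points of $\Gamma_1$ that lie on it, so $\Gamma_1$ appears with multiplicity one in $L_1\cdot X$ and is completely cancelled in $D$.

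The remaining task is to arrange $\operatorname{supp}(D)\subset X(\R)$, and this is the main subtlety. I would take $X$ in a small real neighbourhood of $L_2$ inside $W_\R$. For $X=L_2$, the intersection $L_1\cdot L_2 = \Gamma$ is a real grid of $d^2$ distinct points, and along each real line $\V(x-jz)$ of $\V(L_1)$ the equation of $X$ restricts to a degree-$d$ univariate real polynomial with $d$ simple real roots. These roots vary continuously with $X$ and stay real under sufficiently small real perturbation, so on a real neighbourhood of $L_2$ every intersection point of $L_1$ with $X$ remains real. One must simultaneously achieve smoothness of $X$ and closeness to $L_2$ to preserve real-rootedness; both conditions describe non-empty open subsets of $W_\R$, so a common choice exists and yields the desired $X$ and $D$.
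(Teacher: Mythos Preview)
Your argument is correct but follows a genuinely different route from the paper's. The paper takes $X=\V(f)$ with $f$ a small smooth perturbation of $L_1$ inside $I(\Gamma_2)_d$ (a $3d$-dimensional linear system) and sets $D=\Gamma_2$ itself; reality of $\operatorname{supp}(D)$ is then immediate, the identification of $|D|$ with divisors cut out by degree-$d$ curves uses the Brill--Noether Restsatz and the dimension is computed via the Cayley--Bacharach formula \cite[Corollaries~5 and~6]{EisGreHar}, while base-point-freeness is read off from Remark~\ref{Rem:CBApplication}. You instead take $X$ inside the $3$-dimensional space $W=I(\Gamma_1)_d$ near $L_2$, exchanging the roles of the two products of lines; this forces you to argue for reality of $D=L_1\cdot X-\Gamma_1$ by a perturbation, but in return you obtain $\dim|D|=1$ directly from the restriction sequence $0\to\mathcal{O}_{\PP^2}\to\mathcal{O}_{\PP^2}(d)\to\mathcal{O}_X(d)\to 0$ (bypassing Brill--Noether entirely) and base-point-freeness from an elementary transversality check. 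Both constructions rest on the same configuration $\Gamma=\Gamma_1\cup\Gamma_2$; the paper's keeps $D$ fixed and explicit and stays closer to the Cayley--Bacharach theme of the section, yours keeps the linear system $W$ fixed and minimises the external machinery.
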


\begin{proof}
Start with a complete intersection $\V(L_1)\cap\V(L_2)$ of products of $d$ linear forms and a choice of $\binom{d-1}{2}$ points $\Gamma_2\subset\V(L_1)\cap\V(L_2)$ such that there is a unique curve of degree $d-3$ passing through these points. Moreover, assume that all coefficients in the linear relation among the point evaluations $\{\ev_v\in\R[\ul{x}]_{d-3}^\ast\colon v\in\Gamma_2\}$ are non-zero. This situation is established in the proof of Lemma \ref{Lem:ExRayC4Per}. By Bertini's Theorem \cite[Theorem 6.2.11]{BelCarMR2549804} or \cite[Th\'eor\`eme 6.6.2]{JouMR725671}, there is a smooth curve $\V(f)$ of degree $d$ passing through $\Gamma_2$ such that $f$ is a small perturbation of $L_1$; more precisely, we want $\Gamma = \V(f)\cap \V(L_2)$ to be a totally real transversal intersection. Then the complete linear system $|\Gamma_2|\subset \divi(\V(f))$ is cut out by forms of degree $d$ through $\Gamma\setminus\Gamma_2$, i.e.$|\Gamma_2|$ is the set of all effective divisors in
\[
\{C.\V(f)-(\Gamma-\Gamma_2)\colon C\subset\P^2\text{ of degree }d\},
\]
cf.~Eisenbud-Green-Harris \cite[Corollary 5 (to Brill-Noether's Restsatz)]{EisGreHar}. We have argued in Remark \ref{Rem:CBApplication} that this linear system is base-point free. We compute its dimension with the help of the Cayley-Bacharach Theorem, more precisely \cite[Corollary 6]{EisGreHar}:
\[
1 =|\Gamma_2|-( \ell( (d-3)H) - \ell( (d-3)H-\Gamma_2) )= g-( g-\ell( (d-3)H-\Gamma_2)),
\]
where $H\subset\P^2$ is a line and $\ell(D)$ is the dimension of the Riemann-Roch space of the divisor $D$.
This implies
\[
\ell(\Gamma_2) = \deg(\Gamma_2) +1 - g +\ell( (d-3)H-\Gamma_2) = 2.
\]
\end{proof}

\begin{Rem}
Conversely, given such a linear system on a smooth curve $X\subset\P^2$, we can apply the construction in the proof of Lemma \ref{Lem:ExtremeRaysC4} to construct an extreme ray of $\Sigma_{2d}^\vee$ of maximal rank, at least if there is a totally real transveral intersection $C\cap X$ with $C.X-D\geq0$. The fact, that the linear system has dimension $1$ gives the unique linear relation among the point evaluations at $C.X-D$ on forms of degree $d$. Extremality then follows from the fact that $|D|$ is base-point free by the count of dimensions as in the proof of Lemma \ref{Lem:ExtremeRaysC4}.
\end{Rem}

\section{The case $d=5$ or Ternary Decics.}\label{sec:Dextics}
For $d=3$, a complete characterization of extreme rays of $\Sigma_6^\vee$ was given by Blekherman in \cite{BleNP}. It led to a complete description of the algebraic boundary of the sums of squares cone $\Sigma_6$ by Blekherman, Hauenstein, Ottem, Ranestad and Sturmfels, cf.~\cite{BleHauOttRanStuMR2999301}.

For $d=4$, there are only two possible ranks ($>1$) for extreme rays of $\Sigma_8^\vee$, namely $10$ and $11$; in particular, we know how to construct one of each rank. For rank $10$, we use a complete intersection of a cubic and a quartic and the unique linear relation among the corresponding point evaluations on quartics to construct an extreme ray as above, see also \cite{BlePositiveGorensteinIdeals}. For rank $11$, which is the maximal rank, we use the construction from section \ref{sec:ExtRays}. The Hilbert functions of the extreme rays constructed in this way are
\begin{eqnarray*}
T_{10} & = & (1,3,6,9,10,9,6,3,1) \text{ and}\\
T_{11} & = & (1,3,6,10,11,10,6,3,1).
\end{eqnarray*}
It is possible to prove, similarly to the cases below, that both these ranks give rise to irreducible components of the algebraic boundary of $\Sigma_8$ by projective duality.

So the first new case from this point of view is $d=5$: In fact, we can construct an extreme ray of $\Sigma_{10}^\vee$ of every rank in the interval $\{13,\ldots,17\}$ between the lower and upper bound using the Cayley-Bacharach Theorem. Moreover, using the results of \cite{SinnAlgBound}, we can prove by projective duality that there is an irreducible component of the algebraic boundary of $\Sigma_{10}$ for every one of these ranks; in particular, $\partial_a\Sigma_{10}$ has at least $6$ irreducible components. In the following propositions in this section, we will prove the following theorem.
\begin{Thm}\label{Thm:d5}
For every $r\in\{13,\ldots,17\}$, there is an extreme ray $\R_+\ell_r$ of $\Sigma_{10}^\vee$ such that the rank of the Hankel matrix $B_{\ell_r}$ is $r$. The Hilbert function $T_r$ of $I(\ell_r)$ is
\begin{eqnarray*}
T_{13} & = & (1,3,6,9,12,13,12,9,6,3,1), \\
T_{14} & = & (1,3,6,10,13,14,13,10,6,3,1), \\
T_{15} & = & (1,3,6,10,14,15,14,10,6,3,1), \\
T_{16} & = & (1,3,6,10,14,16,14,10,6,3,1), \\
T_{17} & = & (1,3,6,10,15,17,15,10,6,3,1). 
\end{eqnarray*}
The dual varieties to $\gor(T_r)$ are irreducible components of the algebraic boundary of the sums of squares cone $\Sigma_{10}$ for all $r\in\{13,\ldots,17\}$. The variety $\gor(T_{17})$ is Zariski dense in the Zariski closure of the union of all extreme rays. It has dimension $55$ and degree $53300016$.
\end{Thm}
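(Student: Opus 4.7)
The proof naturally splits into two parts: (1) construct, for each $r\in\{13,\ldots,17\}$, an explicit extreme ray $\R_+\ell_r$ with the prescribed rank and Hilbert function, and (2) invoke projective duality to identify each $\gor(T_r)^\vee$ as an irreducible component of $\partial_a\Sigma_{10}$.

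For (1), the two endpoints are covered by results already in the paper: $r=13=3d-2$ comes from the tightness assertion of Theorem \ref{Thm:ExtremeRays}, and $r=17=\binom{d+2}{2}-4$ comes from Lemma \ref{Lem:ExRayC4Per}. For the intermediate ranks $r=14,15,16$, the plan is to adapt the Cayley-Bacharach template of Proposition \ref{Prop:CB} and Lemma \ref{Lem:ExtremeRaysC4}, replacing the $5\times 5$ grid $\V(L_1)\cap\V(L_2)$ by the transversal complete intersection $X_1\cap X_2$ of two plane curves of appropriately smaller bidegree, chosen so that the induced Gorenstein ideal has the targeted Hilbert function. Natural choices are $(\deg X_1,\deg X_2)=(4,5)$ or $(4,4)$ or $(3,5)$, with a splitting $\Gamma=\Gamma_1\cup\Gamma_2$ so that $\Gamma_2$ imposes codimension-$1$ conditions on forms of the Cayley-Bacharach dual degree. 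Then $\I(\Gamma_1)_5$ has the correct codimension and the point evaluations on $\Gamma_1$ satisfy a unique linear relation with (after a perturbation as in Lemma \ref{Lem:ExRayC4Per}, if needed) all coefficients nonzero. The functional $\ell_r:=\sum_{v\in\Gamma_1\setminus\{P\}}\ev_v-c\,\ev_P$ with $c>0$ given by Cauchy-Schwarz is positive semi-definite Hankel of the prescribed corank, and extremality is checked by counting syzygies to see that $I(\ell_r)_5$ generates a hyperplane in $\R[\ul{x}]_{10}$, exactly mirroring the argument in Lemma \ref{Lem:ExtremeRaysC4}. The Hilbert function is then read off by bookkeeping with the generators and relations dictated by Diesel's $D_{\min}$ data (cf.~Remark \ref{Rem:DieselCom}).

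For (2), we use that each $\gor(T_r)$ is irreducible, unirational and smooth by Theorems \ref{Thm:DieGorT} and \ref{Thm:GorTTangentSpace}, so it determines a unique irreducible subvariety of the space of Hankel matrices. By the argument in the proof of Theorem \ref{Thm:ZarClExtRays}, extremality propagates from the constructed $\ell_r$ to an entire Euclidean-open subset of $\gor(T_r)(\R)$. The results of \cite{SinnAlgBound} then identify the projective dual $\gor(T_r)^\vee$ as an irreducible component of $\partial_a\Sigma_{10}$. Since the five Hilbert functions $T_{13},\ldots,T_{17}$ are pairwise distinct, so are the loci $\gor(T_r)$, and by duality we obtain five distinct irreducible components of $\partial_a\Sigma_{10}$, giving at least the six components advertised (together with the Veronese dual arising from the rank-$1$ stratum of point evaluations).

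The main obstacle is the explicit construction in (1) for the intermediate ranks $r=14,15,16$. For each targeted $T_r$, choosing the bidegree of the complete intersection and the splitting $\Gamma_1\cup\Gamma_2$ so that Cayley-Bacharach yields \emph{exactly} the claimed corank, without accidental additional vanishings that would either drop the rank further or introduce syzygies destroying extremality, is delicate; as in Lemma \ref{Lem:ExRayC4Per}, a small perturbation of the underlying point configuration is the likely remedy. A secondary bookkeeping issue is matching the codimension of $\langle I(\ell_r)_5\rangle_{10}$ to be exactly $1$ in each case, which requires a syzygy count tailored to each Hilbert function.
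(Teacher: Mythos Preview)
Your construction plan for part (1) matches the paper's: complete intersections of bidegrees $(3,5)$, $(4,4)$, and $(4,5)$ are exactly what the paper uses, with $r=15$ and $r=16$ both arising from a $(4,5)$ grid, distinguished by taking $|\Gamma_2|=3$ respectively $2$ collinear points. One practical difference: for $r\in\{13,14,15,16\}$ the paper does not carry out the extremality syzygy count by hand as in Lemma \ref{Lem:ExtremeRaysC4}, but simply verifies with \texttt{Macaulay2} that $\langle I(\ell_r)_5\rangle_{10}$ is a hyperplane and that the Hilbert function is the listed $T_r$.

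The genuine gap is in your part (2). Showing that extremality propagates to a Euclidean-open subset of $\gor(T_r)(\R)$ is not the hypothesis the paper uses to invoke \cite[Theorem 3.8]{SinnAlgBound}. That route works for $r=17$ only because $\cl(\gor(T_{17}))$ \emph{is} the Zariski closure of all extreme rays; for $r<17$ the locus $\gor(T_r)$ sits properly inside that closure, and an open set of extreme rays there does not by itself force $\gor(T_r)^\vee$ to be a component of $\partial_a\Sigma_{10}$ rather than being absorbed into a larger component. What the paper checks instead is the conormal condition
\[
(T_{\ell_r}\gor(T_r))^\perp \;=\; \lspan\bigl(\text{face of }\Sigma_{10}\text{ exposed by }\ell_r\bigr) \;=\; (I(\ell_r)_5)^2,
\]
which by Theorem \ref{Thm:GorTTangentSpace} is equivalent to the ideal-theoretic identity
\[
(I(\ell_r)^2)_{10} \;=\; (I(\ell_r)_5)^2.
\]
This is not automatic precisely because $I(\ell_r)$ has generators in degree below $5$ for each $r\in\{13,14,15,16\}$ (visible from the Hilbert functions $T_r$), so $(I(\ell_r)^2)_{10}$ a priori receives contributions not obviously contained in $(I(\ell_r)_5)^2$. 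The paper verifies this equality case by case with \texttt{Macaulay2}. Your outline never isolates this condition, and without it the passage from ``$\gor(T_r)$ contains extreme rays'' to ``$\gor(T_r)^\vee$ is an irreducible component of $\partial_a\Sigma_{10}$'' is unjustified.
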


The construction given in the preceding section for extreme rays of maximal rank $\binom{d+2}{2}-4$ leads to an extreme ray $\R_+\ell$ of $\Sigma_{10}^\vee$ such that the Hilbert function of the corresponding Gorenstein ideal $I(\ell)$ is
\[
T_{17} = (1,3,6,10,15,17,15,10,6,3,1).
\]
By Theorem \ref{Thm:ZarClExtRays}, the Zariski closure of the set of extreme rays of $\Sigma_{10}^\vee$ is $\cl(\gor(T_{17}))$, a unirational variety of codimension $10$ in $\P^{65}$. So \cite[Theorem 3.8]{SinnAlgBound}, implies that its dual variety is an irreducible component of the algebraic boundary of $\Sigma_{10}$.

We now work our way up beginning with the lowest rank $13$, following the construction in Blekherman \cite{BlePositiveGorensteinIdeals}:
\begin{Prop}
There is an extreme ray $\R_+\ell$ of $\Sigma_{10}^\vee$ of rank $13$. The Hilbert function of the Gorenstein ideal $I(\ell)$ is
\[
T_{13}=(1,3,6,9,12,13,12,9,6,3,1)
\]
and the variety dual to $\cl(\gor(T_{13}))$ is an irreducible component of the algebraic boundary of $\Sigma_{10}$.
\end{Prop}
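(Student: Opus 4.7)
My plan is to adapt the construction of Lemma \ref{Lem:ExtremeRaysC4} to the Cayley-Bacharach complete intersection of a smooth real cubic $X_1 = \V(F)$ and a smooth real quintic $X_2 = \V(G)$. Their intersection $\Gamma = X_1 \cap X_2$ consists of $15$ points in totally real, transversal position for a generic choice of $F$ and $G$. Splitting $\Gamma = \Gamma_1 \cup \Gamma_2$ with $|\Gamma_2| = 2$ and arguing as in Remark \ref{Rem:CBApplication} and Lemma \ref{Lem:ExRayC4Per}, one arranges that Cayley-Bacharach (with $s = 3 + 5 - 3 = 5$ and $k = 5$) yields a unique linear relation $\sum_{v \in \Gamma_1} u_v \ev_v = 0$ on $\R[\ul{x}]_5$ with every $u_v \neq 0$. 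Fix $P \in \Gamma_1$, set $\Lambda = \Gamma_1 \setminus \{P\}$, and define
\[
\ell = \sum_{v \in \Lambda} \ev_v - \frac{u_P^2}{\sum_{v \in \Lambda} u_v^2} \ev_P.
\]
By the same Cauchy-Schwarz estimate as in Lemma \ref{Lem:ExtremeRaysC4}, $B_\ell$ is positive semi-definite.

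Next I would compute the rank and the Hilbert function. The Cayley-Bacharach dimension count identifies the kernel of $B_\ell$ with $(F,G)_5$ extended by one further form coming from the interpolation $f(v) = u_v$ on $\Lambda$, giving $\rk B_\ell = 13$ and $\hilb(I(\ell),5) = 8$. Since $F \in I(\ell)_3$ and no smaller-degree element can occur (otherwise the Catalecticant would have corank $\geq 1$ in degree $\leq 2$, which is ruled out for a generic $\Gamma$), symmetry of the Gorenstein Hilbert function (Remark \ref{Rem:SymHilbFunc}) together with Catalecticant rank bookkeeping forces $\hilb(I(\ell),j) = \binom{j+2}{2} - T_{13}(j)$ in every degree, i.e., $I(\ell) \in \gor(T_{13})$. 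Extremality of $\R_+\ell$ follows from Proposition \ref{Prop:ExtremeRays}(b) via a syzygy count: the obvious syzygies among $F$, $G$ and the extra quintic generator account for precisely the codimension in $\R[\ul{x}]_{10}$ required to make $\R[\ul{x}]_5 \cdot I(\ell)_5$ a hyperplane.

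To conclude that the projective dual variety of $\cl(\gor(T_{13}))$ is an irreducible component of $\partial_a \Sigma_{10}$, I would follow the strategy of Theorem \ref{Thm:ZarClExtRays}. Diesel's Theorem \ref{Thm:DieGorT} gives irreducibility of $\gor(T_{13})$, and Theorem \ref{Thm:GorTTangentSpace} gives smoothness. Continuity of eigenvalues together with the Zariski-openness of the condition that $I(\ell)_5$ generates a hyperplane in degree $10$ then implies that a Euclidean-open neighbourhood of $\ell$ in $\gor(T_{13})(\R)$ consists of extreme rays of $\Sigma_{10}^\vee$. Hence $\cl(\gor(T_{13}))$ is an irreducible component of the Zariski closure of the extreme rays of rank $13$, and \cite[Theorem 3.8]{SinnAlgBound} upgrades this to the statement that its projective dual is an irreducible component of the algebraic boundary $\partial_a \Sigma_{10}$.

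The step I expect to be most delicate is the verification that the full Hilbert function is exactly $T_{13}$. The value $T_{13}(5) = 13$ is immediate from the Cayley-Bacharach rank count, but ensuring the intermediate values, in particular that the single cubic $F$ is the only degree-$3$ form in $I(\ell)$ and that no unexpected quartic generators appear beyond $\R[\ul{x}]_1 \cdot F$, requires either a careful genericity argument for the pair $(F,G)$ (in the spirit of the perturbation in Lemma \ref{Lem:ExRayC4Per}) or an appeal to the Buchsbaum-Eisenbud structure theorem to rule out other admissible Hilbert functions of corank $8$ with cubic generator. This rigidity is precisely where Diesel's combinatorial classification of $\gor(T)$ (Remark \ref{Rem:DieselCom}) does the heavy lifting.
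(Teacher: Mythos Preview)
Your construction has a genuine error in the Cayley--Bacharach count. With a cubic $F$ and a quintic $G$ you have $s = 3+5-3 = 5$, and for $k = 5$ the dual degree is $s-k = 0$. Hence for any nonempty $\Gamma_2$ the right-hand side of Cayley--Bacharach is $|\Gamma_2| - 1$, so taking $|\Gamma_2| = 2$ gives $\dim \I(\Gamma_1)_5 = \dim \I(\Gamma)_5 + 1 = 7 + 1 = 8$. Thus the $13$ point evaluations at $\Gamma_1$ span a $(21-8)$-dimensional subspace of $\R[\ul{x}]_5^\ast$, i.e.\ they are \emph{linearly independent}: there is no relation $\sum_{v\in\Gamma_1} u_v\ev_v = 0$ at all, and your functional $\ell$ is undefined. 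The right choice is $\Gamma_2 = \emptyset$: the full set $\Gamma$ of $15$ points already satisfies a unique relation on $\R[\ul{x}]_5$ (since $\dim \I(\Gamma)_5 = 7$ and $21-7 = 14 = |\Gamma|-1$), and removing one point then yields a rank-$13$ positive semi-definite $B_\ell$ with kernel $(F,G)_5$ plus one interpolating form. This is exactly the paper's construction, carried out with the explicit products of linear forms $L_1 = x(x-z)(x-2z)(x-3z)(x-4z)$ and $L_2 = y(y-z)(y-2z)$.

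A second, smaller gap concerns the dual-variety conclusion. The hypothesis of \cite[Theorem 3.8]{SinnAlgBound} is not merely that a Zariski-open subset of $\gor(T_{13})$ consists of extreme rays; one needs the tangent-space condition, which by Theorem \ref{Thm:GorTTangentSpace} reduces to $(I(\ell)^2)_{10} = (I(\ell)_5)^2$. The paper verifies both this identity and the extremality criterion $\langle I(\ell)_5\rangle_{10} = I(\ell)_{10}$ (and hence the full Hilbert function) by direct computation in \texttt{Macaulay2}, rather than by the syzygy and genericity arguments you sketch.
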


\begin{proof}
Let $L_1 = x(x-z)(x-2z)(x-3z)(x-4z)$ and $L_2 = y(y-z)(y-2z)$ and $\Gamma = \V(L_1)\cap\V(L_2)$. By construction, there is a unique linear relation among $\{\ev_v\in\R[\ul{x}]_5^\ast\colon v\in\Gamma\}$, say $\sum_{v\in\Gamma}u_v\ev_v = 0$, and all coefficients in this relation are non-zero. The linear functional
\[
\ell = \sum_{v\in\Gamma\setminus\{P\}} ev_v - \frac{u_P^2}{\sum_{v\in\Gamma\setminus\{P\}} u_v^2} \ev_P
\]
is positive semi-definite of rank $13$ for any $P\in\Gamma$ by the Cauchy-Schwarz inequality, cf.~proof of Lemma \ref{Lem:ExtremeRaysC4}. By a Hilbert function computation using \texttt{Macaulay2} \cite{Macaulay2}, we verify, that the degree $5$ part of the corresponding Gorenstein ideal $I(\ell)$ generates a hyperplane in degree $10$. To prove that the dual variety to $\gor(T_{13})$ is an irreducible component of $\partial_a \Sigma_{10}$, we use \cite[Theorem 3.8]{SinnAlgBound}. The condition given there is equivalent to
\[
(T_{\ell} \gor(T_{13}))^\perp = (I(\ell)_5)^2
\]
because the face of $\Sigma_{10}$ supported by $\ell$ is the set of sums of squares of polynomials in $I(\ell)_5$, which spans the vector space $(I(\ell)_5)^2$. By the description of the tangent space to $\gor(T_{13})$ at $\ell$ (cf.~Theorem \ref{Thm:GorTTangentSpace}), this is equivalent to
\[
(I(\ell)^2)_{10} = (I(\ell)_5)^2,
\]
which we also check using \texttt{Macaulay2} \cite{Macaulay2}.
\end{proof}

\begin{Prop}
There is an extreme ray $\R_+\ell$ of $\Sigma_{10}^\vee$ of rank $14$. The Hilbert function of the Gorenstein ideal $I(\ell)$ is
\[
T_{14}=(1,3,6,10,13,14,13,10,6,3,1)
\]
and the variety dual to $\cl(\gor(T_{14}))$ is an irreducible component of the algebraic boundary of $\Sigma_{10}$.
\end{Prop}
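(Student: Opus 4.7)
The strategy is to mirror the rank-$13$ argument, replacing the $(5,3)$-complete intersection there by a transversal $(4,4)$-complete intersection of two real quartics. I would take
\[
L_1 = x(x-z)(x-2z)(x-3z), \quad L_2 = y(y-z)(y-2z)(y-3z),
\]
so that $\Gamma = \V(L_1)\cap\V(L_2)$ is the $4\times 4$ grid of $16$ totally real points, and the relevant socle degree on $\Gamma$ is $s = 4+4-3 = 5$. The Koszul resolution of the complete intersection gives $\dim\I(\Gamma)_5 = 21-15=6$, so the $16$ point evaluations at $\Gamma$ span a $15$-dimensional subspace of $\R[\ul{x}]_5^\ast$ and satisfy a unique (up to scaling) linear relation $\sum_{v\in\Gamma} u_v\ev_v = 0$. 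Applying the Cayley-Bacharach Theorem with $k=s=5$ and $\Gamma_2 = \{v\}$, the dual degree is $s-k=0$, and the right-hand side equals $|\Gamma_2| - 1 = 0$, which forces $\dim\I(\Gamma\setminus\{v\})_5 = \dim\I(\Gamma)_5$ for every $v\in\Gamma$; a dimension count on the span of the remaining $15$ point evaluations then gives $u_v\neq 0$ for all $v\in\Gamma$.

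Fixing $P\in\Gamma$ and writing $\Lambda = \Gamma\setminus\{P\}$, I would set
\[
\ell = \sum_{v\in\Lambda} \ev_v \;-\; \frac{u_P^2}{\sum_{v\in\Lambda} u_v^2}\,\ev_P,
\]
exactly as in Lemma \ref{Lem:ExtremeRaysC4}. The Cauchy-Schwarz inequality gives $B_\ell \succeq 0$, and the analysis there identifies the kernel of $B_\ell$ as $\I(\Gamma)_5$ together with a unique additional quintic $f$ characterised modulo $\I(\Gamma)_5$ by $f(v) = u_v$ on $\Lambda$; this gives $\crk(B_\ell) = 7$ and hence $\rk(B_\ell) = 14$. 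Since the Hilbert function of $\C[\ul{x}]/\I(\Gamma)$ stabilises at $|\Gamma|=16$ by degree $6$, the evaluation map $\R[\ul{x}]_j \to \R^{16}$ at $\Gamma$ is surjective for $j\geq 6$; unpacking the condition $\ell(pq)=0$ for $p\in I(\ell)_k$ with $k\leq 4$ (so that $\deg q = 10-k\geq 6$) then forces $p\in\I(\Gamma)$, which gives $I(\ell)_k = \I(\Gamma)_k$ for all $k\leq 4$. Combined with the rank count and the Gorenstein symmetry of Remark \ref{Rem:SymHilbFunc}, this yields the Hilbert function $T_{14}=(1,3,6,10,13,14,13,10,6,3,1)$.

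To deduce extremality of $\R_+\ell$, I would verify via \cite{Macaulay2} that $I(\ell)_5$ generates a hyperplane in $\R[\ul{x}]_{10}$ and apply Proposition \ref{Prop:ExtremeRays}. For the algebraic boundary statement I would invoke \cite[Theorem 3.8]{SinnAlgBound}, which in view of Theorem \ref{Thm:GorTTangentSpace} reduces to the equality
\[
(I(\ell)^2)_{10} = (I(\ell)_5)^2
\]
inside $\R[\ul{x}]_{10}$; this is again verified by a direct computation in \cite{Macaulay2} on the explicit ideal above.

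The main obstacle is computational rather than structural. Because $I(\ell)$ genuinely contains the two quartics $L_1$ and $L_2$, the quartic generators contribute nontrivial terms to $(I(\ell)^2)_{10}$---in particular products of the form $L_i\cdot I(\ell)_6$---and one must confirm that every such product already lies in the span of products of pairs of quintics in $I(\ell)$. Should the symmetric $4\times 4$ grid happen to fail either the hyperplane check or the $(I^2)_{10}$ equality, a small perturbation of the defining linear factors analogous to the one in Lemma \ref{Lem:ExRayC4Per} provides a fallback, and the non-vanishing of the Cayley-Bacharach coefficients established above is an open condition that persists under such deformation.
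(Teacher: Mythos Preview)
Your proposal is correct and follows essentially the same approach as the paper: the identical $4\times 4$ grid $\Gamma=\V(L_1)\cap\V(L_2)$ with $L_i$ of degree $4$, the same Cayley--Bacharach/Cauchy--Schwarz construction of $\ell$, and the same \texttt{Macaulay2} verifications of the hyperplane condition and of $(I(\ell)^2)_{10}=(I(\ell)_5)^2$. You simply supply more by-hand justification (the corank count, the Hilbert function, and the obstacle discussion) where the paper states the conclusions directly.
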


\begin{proof}
In this case, take $L_1 = x(x-z)(x-2z)(x-3z)$ and $L_2 = y(y-z)(y-2z)(y-3z)$ and set $\Gamma = \V(L_1)\cap \V(L_2)$. There is a unique linear relation among $\{\ev_v\in\R[\ul{x}]_5^\ast \colon v\in\Gamma\}$, say $\sum_{v\in\Gamma} u_v\ev_v = 0$, and all its coefficients are non-zero. As above, the linear functional
\[
\ell = \sum_{v\in\Gamma\setminus\{P\}} ev_v - \frac{u_P^2}{\sum_{v\in\Gamma\setminus\{P\}} u_v^2} \ev_P
\]
is positive semi-definite of rank $14$ for any $P\in\Gamma$. Again, using \texttt{Macaulay2} \cite{Macaulay2}, we verify, that the degree $5$ part of the corresponding Gorenstein ideal $I(\ell)$ generates a hyperplane in degree $10$ and that
\[
(I(\ell)^2)_{10} = (I(\ell)_5)^2.
\]
\end{proof}

\begin{Prop}
There is an extreme ray $\R_+\ell$ of $\Sigma_{10}^\vee$ of rank $15$. The Hilbert function of the Gorenstein ideal $I(\ell)$ is
\[
T_{15}=(1,3,6,10,14,15,14,10,6,3,1)
\]
and the variety dual to $\cl(\gor(T_{15}))$ is an irreducible component of the algebraic boundary of $\Sigma_{10}$.
\end{Prop}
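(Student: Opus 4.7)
The plan is to follow the template of the preceding proofs for $T_{13}$ and $T_{14}$: I want a point configuration $\Gamma_1 \subset \PP^2$ whose degree-$5$ evaluations $\{\ev_v : v \in \Gamma_1\}$ satisfy a unique linear relation with every coefficient nonzero, and such that the Cauchy--Schwarz functional built from this relation has rank $15$ and gives a Gorenstein ideal with Hilbert function exactly $T_{15}$.

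The natural candidate is a $(4,5)$-complete intersection with three collinear points removed. Take $L_1 = x(x-z)(x-2z)(x-3z)$ and $L_2 = y(y-z)(y-2z)(y-3z)(y-4z)$, set $\Gamma = \V(L_1)\cap \V(L_2)$ (a totally real complete intersection of $20$ points), and delete three points on a common line, e.g.\ $\Gamma_2 = \{(0:j:1): j = 0,1,2\}$; put $\Gamma_1 = \Gamma \setminus \Gamma_2$. Applying Cayley--Bacharach with $s = 4+5-3 = 6$ at degree $k = 5$ (dual degree $1$), the three collinear points of $\Gamma_2$ impose only two conditions on linear forms, giving $\dim \I(\Gamma_1)_5 = \dim \I(\Gamma)_5 + 1 = 5$. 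Hence the evaluations at $\Gamma_1$ span a $16$-dimensional subspace of $\R[\ul{x}]_5^\ast$ and satisfy exactly one linear relation $\sum_{v\in\Gamma_1} u_v \ev_v = 0$; that every $u_v$ is nonzero reduces to a dual Cayley--Bacharach analysis in degree $d-3 = 2$ as in Remark \ref{Rem:CBApplication}, and can in any case be forced by a small perturbation of the grid in the spirit of Lemma \ref{Lem:ExRayC4Per}.

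Given such a $\Gamma_1$ and some $P \in \Gamma_1$, define
\[
\ell = \sum_{v \in \Gamma_1 \setminus \{P\}} \ev_v - \frac{u_P^2}{\sum_{v \in \Gamma_1 \setminus \{P\}} u_v^2} \ev_P.
\]
The Cauchy--Schwarz argument of Lemma \ref{Lem:ExtremeRaysC4} shows that $B_\ell$ is positive semi-definite with kernel of dimension $\dim \I(\Gamma_1)_5 + 1 = 6$, so $B_\ell$ has rank $15$. A \texttt{Macaulay2} computation on this explicit $\ell$ then verifies (i) the Hilbert function of $I(\ell)$ equals $T_{15}$, (ii) $I(\ell)_5$ generates a hyperplane in $\R[\ul{x}]_{10}$, so by Proposition \ref{Prop:ExtremeRays}(b) the ray $\R_+\ell$ is extreme, and (iii) the identity $(I(\ell)^2)_{10} = (I(\ell)_5)^2$ holds. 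The last, combined with Theorem \ref{Thm:GorTTangentSpace}, matches the hypothesis of \cite[Theorem 3.8]{SinnAlgBound}, which then yields that the dual variety to $\cl(\gor(T_{15}))$ is an irreducible component of $\partial_a \Sigma_{10}$.

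The principal obstacle is controlling the Hilbert function: a too special configuration introduces extra syzygies and collapses the Hilbert function to $T_{14}$ or $T_{13}$, while a too generic configuration returns $T_{17}$ as in the preceding section. Landing in the intermediate stratum $\gor(T_{15})$ requires precisely the degree of degeneracy that admits the single quartic $L_1$ in the ideal but permits no further drop in degrees $\leq 4$, and the \texttt{Macaulay2} check in step (i) is both the confirmation that this has been achieved and the diagnostic guiding any perturbation needed if the naive choice of $\Gamma_2$ fails.
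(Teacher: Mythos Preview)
Your overall strategy matches the paper's: take a $(4,5)$ complete intersection, remove three collinear points to force a unique Cayley--Bacharach relation among degree-$5$ evaluations, build $\ell$ via the Cauchy--Schwarz trick, and finish with a \texttt{Macaulay2} verification of extremality, Hilbert function, and $(I(\ell)^2)_{10}=(I(\ell)_5)^2$. The paper uses $L_1$ of degree $5$, $L_2$ of degree $4$, and $\Gamma_2=\{(0{:}2{:}1),(1{:}1{:}1),(2{:}0{:}1)\}$ on the diagonal $x+y=2$.

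However, your specific choice $\Gamma_2=\{(0{:}j{:}1):j=0,1,2\}$ fails, and the perturbation you invoke cannot repair it. Apply Cayley--Bacharach to $\Gamma=(\Gamma_1\setminus\{v_0\})\cup(\Gamma_2\cup\{v_0\})$ at $k=5$: the dual degree is $s-k=1$ (not $d-3=2$; Remark~\ref{Rem:CBApplication} concerns the $(d,d)$ situation), and one reads off that $u_{v_0}=0$ exactly when $v_0$ lies on the line spanned by $\Gamma_2$. Your line $x=0$ meets the grid in five points, so the two residual grid points $(0{:}3{:}1)$ and $(0{:}4{:}1)$ in $\Gamma_1$ carry $u_{(0{:}3{:}1)}=u_{(0{:}4{:}1)}=0$. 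Then your functional decomposes as $\ell=\ell'+\ev_{(0{:}3{:}1)}+\ev_{(0{:}4{:}1)}$, where $\ell'$ is the analogous Cauchy--Schwarz functional for the $(3,5)$ intersection $\Gamma_1\setminus\{(0{:}3{:}1),(0{:}4{:}1)\}$; all three summands lie in $\Sigma_{10}^\vee$, so $\R_+\ell$ is not extreme and your \texttt{Macaulay2} check (ii) will fail. Perturbing the grid lines does not help: the perturbed line through $\Gamma_2$ still contains five perturbed grid points, and the same decomposition persists. The correct fix is not perturbation but choosing a line that meets the grid in exactly three points, which is precisely what the paper's diagonal $x+y=2$ accomplishes.
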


\begin{proof}
In this case, we start with a complete intersection of a quartic and a quintic, $L_1 = x(x-z)(x-2z)(x-3z)(x-4z)$, $L_2 = y (y-z)(y-2z)(y-3z)$ and $\Gamma = \V(L_1)\cap \V(L_2)$. Choose $\Gamma_2 = \{(0:2:1),(1:1:1),(2:0:1)\}$ and set $\Gamma_1 = \Gamma\setminus\Gamma_2$. By Cayley-Bacharach, there is a unique linear relation among the $17$ points of $\Gamma_1$. Using \texttt{Macaulay2} \cite{Macaulay2}, we complete the proof as above.
\end{proof}

\begin{Prop}
There is an extreme ray $\R_+\ell$ of $\Sigma_{10}^\vee$ of rank $16$. The Hilbert function of the Gorenstein ideal $I(\ell)$ is
\[
T_{16}=(1,3,6,10,14,16,14,10,6,3,1)
\]
and the variety dual to $\cl(\gor(T_{16}))$ is an irreducible component of the algebraic boundary of $\Sigma_{10}$.
\end{Prop}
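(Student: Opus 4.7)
The plan is to mimic the construction used for the previous ranks, but now starting from a $(4,5)$ complete intersection and excising two points instead of three. Concretely, take $L_1 = x(x-z)(x-2z)(x-3z)$ of degree $4$ and $L_2 = y(y-z)(y-2z)(y-3z)(y-4z)$ of degree $5$, and let $\Gamma = \V(L_1)\cap\V(L_2)$ be the resulting $4\times 5$ grid of $20$ points in $\PP^2$. Choose two points $\Gamma_2\subset\Gamma$ whose connecting line passes through no other grid point (for instance $\Gamma_2 = \{(0:0:1),(3:4:1)\}$), and set $\Gamma_1 = \Gamma\setminus\Gamma_2$.

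For a $(4,5)$ complete intersection $s = d_1+d_2-3 = 6$, and one computes $\dim I(\Gamma)_5 = 4$. Applied at dual degree $s-k = 1$, the Cayley--Bacharach Theorem shows that the two points of $\Gamma_2$, being distinct, impose two independent conditions on linear forms, so $\dim I(\Gamma_1)_5 = \dim I(\Gamma)_5 = 4$. Hence there is a unique relation $\sum_{v\in\Gamma_1} u_v\ev_v = 0$ on degree-$5$ forms among the $18 = \binom{7}{2}-3$ point evaluations at $\Gamma_1$. The assumption on the line through $\Gamma_2$ guarantees (via Remark \ref{Rem:CBApplication}) that adding any $P\in\Gamma_1$ to $\Gamma_2$ gives three non-collinear points, so every coefficient $u_v$ is non-zero. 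As in Lemma \ref{Lem:ExtremeRaysC4}, the functional
\[
\ell = \sum_{v\in\Gamma_1\setminus\{P\}} \ev_v - \frac{u_P^2}{\sum_{v\in\Gamma_1\setminus\{P\}} u_v^2}\,\ev_P
\]
is then positive semi-definite by Cauchy--Schwarz, and the kernel of $B_\ell$ is spanned by $I(\Gamma_1)_5$ together with the unique form of degree $5$ achieving the Cauchy--Schwarz equality, giving $\crk(B_\ell) = 5$, i.e.\ $\rk(B_\ell)=16$.

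It remains to verify (i) $\hilb(I(\ell)) = T_{16}$, (ii) $\langle I(\ell)_5\rangle_{10}$ is a hyperplane in $\R[\ul{x}]_{10}$ (extremality, via Proposition \ref{Prop:ExtremeRays}(b)), and (iii) $(I(\ell)^2)_{10} = (I(\ell)_5)^2$, which by Theorem \ref{Thm:GorTTangentSpace} and \cite[Theorem 3.8]{SinnAlgBound} is exactly the condition that the dual variety of $\cl(\gor(T_{16}))$ is an irreducible component of $\partial_a \Sigma_{10}$. Exactly as in the proofs for ranks $13,14,15$, all three checks are carried out by an explicit \texttt{Macaulay2} computation on the specific functional $\ell$ above.

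The main obstacle is pinning down the correct Hilbert function: we need $\dim I(\ell)_4 = 1$ and $\dim I(\ell)_5 = 5$, as opposed to a degeneration with larger ideal in some degree (which would place $\ell$ in $\cl(\gor(T_r))$ for some $r<16$). The key control is again Cayley--Bacharach: at dual degree $s-4 = 2$, the two points of $\Gamma_2$ impose two independent conditions on conics, giving $\dim I(\Gamma_1)_4 = \dim I(\Gamma)_4 = 1$, so no unexpected quartic enters the ideal. The remaining mid-degree values of the Hilbert function, together with the equality $(I(\ell)^2)_{10} = (I(\ell)_5)^2$, are the step most reliant on symbolic computation, and this is where the concrete choice of $\Gamma_2$ (and of the point $P$ used in Cauchy--Schwarz) must be made so that no accidental relation appears.
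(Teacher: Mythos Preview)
Your argument follows the paper's own proof essentially verbatim: a $(4,5)$ totally real complete intersection of $20$ points, excise two points $\Gamma_2$ whose connecting line meets no further grid point, use Cayley--Bacharach at dual degree $1$ to get a unique relation with all coefficients nonzero, build $\ell$ via Cauchy--Schwarz, and finish (extremality, Hilbert function, and the tangent-space identity $(I(\ell)^2)_{10}=(I(\ell)_5)^2$) with \texttt{Macaulay2}. The paper uses $\Gamma_2=\{(0{:}1{:}1),(1{:}0{:}1)\}$ rather than your diagonal pair, and it simply asserts the computational checks without your additional Cayley--Bacharach argument for $\dim I(\ell)_4=1$; otherwise the proofs coincide.
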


\begin{proof}
Choose $L_1 = x(x-z)(x-2z)(x-3z)(x-4z)$, $L_2 = y (y-z)(y-2z)(y-3z)(y-4z)$ and $\Gamma = \V(L_1)\cap \V(L_2)$. This time, $\Gamma_2 = \{(0:2:1),(1:1:1),(2:0:1),(1:4:1),(2:3:1),(3:2:1),(4:1:1)\}$ and $\Gamma_1 = \Gamma\setminus\Gamma_2$ do the job: Cayley-Bacharach gives a unique linear relation among the $18$ points of $\Gamma_1$. Using \texttt{Macaulay2} \cite{Macaulay2}, we complete the proof as above.
\end{proof}

For general $d>5$, our constructive method using the Cayley-Bacharach Theorem cannot construct an extreme ray of every rank in the interval $\{3d-2,\ldots,\binom{d+2}{2}\}$ given by the lower and upper bound. The first failure occurs for $d=6$ and rank $17\in\{16,\ldots,24\}$. In fact, $\Sigma_{12}^\vee$ does not have an extreme ray of rank $17$, as we will see below, cf.~Lemma \ref{Lem:Rank17}. Let us first argue why we cannot construct an extreme ray of rank $17$ of $\Sigma_{12}^\ast$.
\begin{Rem}\label{Rem:d6r17}
Our construction starts with a totally real intersection of two curves $X_1$ and $X_2$ with $\deg(X_1)+\deg(X_2)\geq d+3$; we then need $19$ intersection points such that the corresponding point evaluations on forms of degree $6$ satisfy a unique linear relation in which all coefficients are non-zero. This configuration would lead to a positive linear functional such that the Hankel matrix has the desired rank $17$ (of course we would still need to prove extremality). We will see that this is not possible:

The following tuples are permissible choices for the degrees of the curves $(3,6)$, $(4,5)$, $(4,6)$, $(5,5)$, $(5,6)$ and $(6,6)$. 
For $(\deg(X_1),\deg(X_2)) = (3,6)$, the transversal intersection has only $18$ points. In the case $(4,5)$, there is a unique linear relation among point evaluations at the $20$ intersection points such that all coefficients are non-zero; in particular, whatever point we remove, the remaining $19$ point evaluations are linearly independent on forms of degree $6$. In the cases $(4,6)$, $(5,5)$ and $(5,6)$, we cannot have the desired number of points on a curve of dual degree $s-d$: For example, in order to apply the duality of the Cayley-Bacharach Theorem to the $24$ intersection points in the case $(4,6)$, we would need to have $5$ of the intersection points on a line, which intersects the quartic in only $4$ points. The last case $(6,6)$ is more subtle: We would like to find exactly $17$ intersection points on a cubic, which is impossible, because there is a unique linear relation among the corresponding point evaluations on forms of degree $6$ on the complete intersection of a cubic and a sextic, cf.~Eisenbud-Green-Harris \cite[CB4]{EisGreHar}.
\end{Rem}

This is not a defect of our construction in this case. In fact, there are no extreme rays of $\Sigma_{12}^\vee$ of rank $17$.
\begin{Lem}\label{Lem:Rank17}
There is no Gorenstein ideal $I\subset \C[x,y,z]$ with socle in degree $12$ such that $\hilb(I,6) = 17$ and $I_6$ is maximal with respect to inclusion among $J_6$, where $J$ runs over all Gorenstein ideals with socle in degree $12$.
\end{Lem}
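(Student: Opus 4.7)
The plan is to pin down the Hilbert function of $I$ uniquely using the Buchsbaum--Eisenbud Structure Theorem together with Gorenstein symmetry, and then to compute $\C[\ul{x}]_6 \cdot I_6$ by hand and see its dimension is at most $73 < 90 = \dim I_{12}$.

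First, I would argue $T(1)=3$. Otherwise $I$ contains a linear form and $\C[\ul{x}]/I$ is an artinian Gorenstein quotient of a polynomial ring in two variables, hence a complete intersection of forms of degrees summing to $14$; a direct Hilbert-series computation forces $T(6)\leq 7$, contradicting $T(6)=17$. By the Buchsbaum--Eisenbud Structure Theorem, $I$ has a minimal free resolution
\[
0 \to \C[\ul{x}](-15) \to \bigoplus_i \C[\ul{x}](-b_i) \to \bigoplus_i \C[\ul{x}](-a_i) \to I \to 0,
\]
with $a_i + b_i = 15$ and $2\leq a_i\leq 7$. Writing $g_a$ for the number of minimal generators in degree $a$, since all syzygy degrees satisfy $b_i\geq 8$ one has $\dim I_k = \sum_a g_a \binom{k-a+2}{2}$ for $k\leq 7$. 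Combined with $\dim I_6 = 11$ and the Gorenstein symmetry $T(5)=T(7)$, this yields two Diophantine equations
\[
15g_2 + 10g_3 + 6g_4 + 3g_5 + g_6 = 11, \qquad 11g_2 + 9g_3 + 7g_4 + 5g_5 + 3g_6 + g_7 = 15.
\]
The first immediately forces $g_2=0$ and $g_3\leq 1$; a finite case analysis then shows that the branch $g_3=0$ is infeasible (the second equation forces $g_7<0$ for every admissible $(g_4,g_5,g_6)$ coming from the first), while $g_3=1$ forces $(g_4,g_5,g_6)=(0,0,1)$ and $g_7=3$. Thus the Hilbert function must be $T = (1,3,6,9,12,15,17,15,12,9,6,3,1)$.

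Second, I would use this rigid structure to bound $\dim(\C[\ul{x}]_6 \cdot I_6)$. All five minimal syzygies occur in degrees $\{8,8,8,9,12\}$, so none lies in degree $\leq 6$ and therefore $I_6 = q\cdot \C[\ul{x}]_3 \oplus \C\cdot p$, where $q$ and $p$ are the cubic and sextic minimal generators (the direct sum is valid because minimality of $p$ means $p\notin q\cdot\C[\ul{x}]_3$). Consequently $\C[\ul{x}]_6\cdot I_6 = q\cdot\C[\ul{x}]_9 + p\cdot\C[\ul{x}]_6$, and unique factorization in $\C[\ul{x}]$ gives
\[
q\cdot\C[\ul{x}]_9 \cap p\cdot\C[\ul{x}]_6 = \operatorname{lcm}(q,p)\cdot\C[\ul{x}]_{3+k},
\]
where $k = \deg\gcd(q,p)\in\{0,1,2\}$ (the case $k=3$ is excluded since it would force $p\in q\cdot\C[\ul{x}]_3$). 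Hence
\[
\dim(\C[\ul{x}]_6\cdot I_6) = 55 + 28 - \binom{5+k}{2} \leq 73 < 90 = \dim I_{12},
\]
so by Proposition~\ref{Prop:ExtremeRays}(b) the ideal $I_6$ cannot be maximal, the desired contradiction.

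The main obstacle is Step 1: one must extract the two linear equations carefully from the Hilbert-series/Buchsbaum--Eisenbud data and then rule out every nonnegative integer solution other than $(1,0,0,1,3)$. Once this rigidity is in hand, Step 2 is a clean application of unique factorization in $\C[\ul{x}]$.
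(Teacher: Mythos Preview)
Your Step~2 is fine for the Hilbert function you found, but Step~1 has a genuine gap. You assert without proof that every minimal generator of $I$ has degree $\leq 7$, equivalently that every syzygy degree satisfies $b_i\geq 8$. This is what makes your formula $\dim I_k=\sum_a g_a\binom{k-a+2}{2}$ valid for $k\leq 7$, and hence underlies both Diophantine equations. The assumption is false in general: the Buchsbaum--Eisenbud pairing only gives $a_i+b_i=15$, so a generator in degree $\geq 8$ forces a syzygy in degree $\leq 7$, and such generators \emph{do} occur for Gorenstein ideals with $T(6)=17$.

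Concretely, the minimal degree sequence $q_{\min}=(4,5,6,6,8,8,8)$ with $p_{\min}=(11,10,9,9,7,7,7)$ yields a Gorenstein ideal with socle degree $12$ and Hilbert function
\[
T=(1,3,6,10,14,17,17,17,14,10,6,3,1),
\]
and the sequence $q=(5,5,5,5,5,7,7,9,9,9,9)$ yields $T=(1,3,6,10,15,16,17,16,15,10,6,3,1)$. Both have $T(6)=17$ but are different from your $(1,3,6,9,12,15,17,\ldots)$; your ``uniqueness'' conclusion is therefore wrong. If you redo the Hilbert-series bookkeeping without the bad assumption, your first equation becomes
\[
15g_2+10g_3+6g_4+3g_5+g_6-\sum_{a\geq 9} g_a\binom{a-7}{2}=11,
\]
and similarly for the second, which opens up many more solutions.

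The paper's proof accordingly does \emph{not} try to pin down a single Hilbert function. Instead it runs a case analysis on the smallest generator degree $k\in\{2,3,4,5,6\}$ and, in each case, uses the maximality hypothesis ($\V(I_6)=\emptyset$ and $\langle I_6\rangle_{12}=I_{12}$) to locate a complete intersection of three forms inside $I$ whose socle degree is either $12$ (forcing that CI to equal $I$, hence determining $T(6)\neq 17$) or $<12$ (an immediate contradiction), together with Diesel's description of $\gor(T)$ for the two $k=4$ and two $k=5$ subcases. Your direct dimension count in Step~2 is a clean alternative for the $k=3$ case, but to complete the argument you must either treat the remaining Hilbert functions separately or justify the bound $a_i\leq 7$ up front---and the latter is simply not true.
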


In the proof of this lemma, we will use the following theorem multiple times.
\begin{Rem}
The complete intersection of three ternary forms of degree $d_1$, $d_2$, and $d_3$ respectively is a Gorenstein ideal in $\C[x,y,z]$ with socle in degree $d_1+d_2+d_3-3$, see \cite[Theorem CB8]{EisGreHar}. The socle degree follows using an elementary count of dimensions and the fact that the generators must be relatively prime.
\end{Rem}

\begin{proof}
We exclude possibilities arguing by the lowest degree $k$ of a generator of $I$. First note that maximality of $I_6$ implies that $\langle I_6 \rangle_{12} = I_{12}$ and $\V(I_6)=\emptyset$. In particular, we can always choose a complete intersection of three forms in $I_6$, one of which can be chosen to be a suitable multiple of a generator of minimal degree of $I$. Let $k$ be the minimal degree of a generator of $I$. The ideal $I$ cannot contain a quadric generator, because the linear function defining $I$ would then be supported on $12$ points by the apolarity lemma (see \cite[Chapter I]{IarKanMR1735271}), which implies $\hilb(I,6) \leq 12$. In case $k=3$, the Gorenstein ideal is actually generated by a cubic and two sextics that are a complete intersection, see Stanley \cite{Stanley}, so $\hilb(I,6) = 16 = 28 - (10 +2)$. The case $k=6$ is also easily excluded because Hilbert functions of Gorenstein ideals in $\C[x,y,z]$ are unimodal by \cite[Theorem 4.2]{Stanley}, which implies in this case that $\hilb(I,5)\leq \hilb(I,6) = 17$, or equivalently $\dim(I_5) \geq 4$. This leaves the two cases $k=5$ and $k=4$: 

Suppose $k=5$, then the Hilbert functions of Gorenstein ideals with socle in degree $12$ and order $5$ are in 1-1 correspondence with self-complimentary partitions of $10\times 4$ blocks, cf.~\cite[Proposition 3.9]{Die}. By unimodality of Hilbert functions, we have $\dim(I_5)\geq 4$. This determines the first three rows of the blocks, so we can choose two more generators of degree $\leq 6$. Since a block of degree $5$ forces a relation in degree $6$ by the self-complimentarity of the partition, the $4$ generators of degree $5$ generate a $4\cdot 3 - 3 = 9$-dimensional subspace of $\C[x,y,z]_6$. The only two possible degrees for the other two generators to achieve $\dim(I_6) = 28-17 = 11$ are therefore one more generator of degree $5$ and one generator of degree $7$ or two generators in degree $6$. In the first case, the degrees of generators are $q = (5,5,5,5,5,7,7,9,9,9,9)$ and the corresponding relation degrees are $p = (10,10,10,10,10,8,8,6,6,6,6)$. Since there is no generator of degree $6$ and $\V(I_6) = \emptyset$, we find a complete intersection of three forms of degree $5$ contained in $I$. These generate a Gorenstein ideal with socle in degree $5+5+5-3 = 12$. This is impossible, becaues $I$ contains further generators. In the second case, the degrees of generators are $q = (5,5,5,5,6,6,8,8,9,9,9)$ and the corresponding relation degrees are $p = (10,10,10,10,9,9,7,7,6,6,6)$. The quasiprojective variety $\gor(T)$, where $T$ is given by these generator and relation degrees, contains a dense subset of Gorenstein ideals generated by polynomials of degree $q_{\rm min} = (5,5,5,5,8,8,9)$ by \cite[Section 3.3]{Die}. So the same argument as in the first case excludes this possibility, too.

So now we are left with the case $k=4$: In this case, Hilbert functions of Gorenstein ideals with socle in degree $12$ and order $4$ correspond to self-complimentary partitions of $8\times 6$ blocks. If $\dim(I_4) =2$, then these would generate a $2\cdot 6 = 12$-dimensional subspace in $\C[x,y,z]_6$ because they correspond to relations in degree $7$. So $\dim(I_4) = 1$. We can choose $4$ more degrees of generators $\leq 6$. A generator of degree $5$ comes with a relation in degree $5$ and therefore, the only two possible choices of degrees for the generators with $\hilb(I,6) = 17$ are one generator of degree $5$ and three generators of degree $6$ or two generators of degree $5$ and one generator of degree $6$. Let's first consider $q = (4,5,5,6,7,7,8,9,9)$, which corresponds to relation degrees $p = (11,10,10,9,8,8,7,6,6)$. Again, $\gor(T)$, where $T$ is given by these generator and relation degrees, contains a dense subset of Gorenstein ideals generated by polynomials of degree $q_{\rm min} = (4,5,5,7,9)$. There is no generator of degree $6$ anymore, so $\V(I_6) = \emptyset$ implies that we find a complete intersection of a quartic and two quintics, which generate a Gorenstein ideal with socle in degree $4+5+5-3=11$, which is impossible. The last remaining case is $q = (4,5,6,6,6,8,8,8,9)$ with corresponding relation degrees $p = (11,10,9,9,9,7,7,7,6)$. Here, $\gor(T)$ contains a dense subset of Gorenstein ideals with generators of degree $q_{\rm min} = (4,5,6,6,8,8,8)$ and correspondingly $p_{\rm min} = (11,10,9,9,7,7,7)$. The assumption $\V(I_6) = \emptyset$ implies only that we can find a complete intersection of a quartic and two sextics. They generate a Gorenstein ideal with socle in degree $4+6+6-1 = 13$. This Gorenstein ideal has Hilbert function $3$ in degree $12$ because Hilbert functions of Gorenstein ideals are symmetric, cf.~Remark \ref{Rem:SymHilbFunc}. This is impossible because this complete intersection together with the other $4$ generators would then fill $\C[x,y,z]_{12}$. This concludes the case study.
\end{proof}

We can use similar ideas as above for $d=5$ to show that $\Sigma_{12}^\vee$ has extreme rays of ranks $18,\ldots,23$. We start with the complete intersection of a sextic with a quartic, quintic, or sextic in $24$, $30$, and $36$ points respectively and remove the desired number of points on the appropriate number of lines to get a unique linear relation among the point evaluations on sextics at the remaining points. We can then use \texttt{Macaulay2} \cite{Macaulay2} to check extremality as before. Again, for all these ranks, the projective dual variety to $\gor(T_r)$ will be an irreducible component of the algebraic boundary of $\Sigma_{12}$.

\textbf{Acknowledgements.} This paper grew out of the PhD thesis of the second author, who wants to thank his adviser Claus Scheiderer for his support, ideas and input. 
The first author was partially supported by Alfred P. Sloan research fellowship and NSF DMS CAREER award. The second author was supported by the Studienstiftung des deutschen Volkes and the National Institute of Mathematical Sciences, Daejeon, Korea, during the Summer 2014 Thematic Program on {\em Applied Algebraic Geometry}.

\end{document}